\documentclass[12pt]{amsart}

\usepackage[utf8]{inputenc}
\usepackage[english]{babel}

\usepackage{amsfonts, amsmath, amssymb, amsthm}
\usepackage{mathtools}
\usepackage{mathdots}
\usepackage{faktor}
\usepackage{enumerate}
\usepackage[inline]{enumitem}
\usepackage{enumitem}
\usepackage{varwidth}
\usepackage{tasks}

\usepackage{csquotes}
\usepackage[
backend=biber
]{biblatex}
\usepackage{hyperref}

\theoremstyle{plain}
\newtheorem*{definition}{Definition}

\theoremstyle{definition}

\theoremstyle{definition}

\theoremstyle{plain}
\newtheorem{theorem}{Theorem}

\theoremstyle{plain}
\newtheorem*{theorem*}{Theorem}

\theoremstyle{plain}
\newtheorem{lemma}{Lemma}

\theoremstyle{plain}

\theoremstyle{plain}
\newtheorem*{corollary*}{Corollary}

\theoremstyle{plain}

\title{Associative Rota--Baxter operators on the Sweedler algebra~$H_4$}

\author{Maxim V.~Podkorytov}

\date{\today}

\begin{document}

\begin{abstract}
In this paper, we classify all Rota--Baxter operators on the Sweedler algebra \( H_4 \) up to conjugation and dualization. Modulo algebra (anti)automorphisms of \( H_4 \), we first describe its subalgebras and then analyse the kernel of a Rota--Baxter operator. The classification is carried out according to the dimension of this kernel, yielding a complete description of such operators. A complete list of operators is given in the theorem of the final section.
\end{abstract}
\maketitle
\tableofcontents

\section*{Introduction}

Rota–Baxter operators first appeared in a 1951 paper by F. Tricomi \cite{tricomi1951}. The concept was subsequently developed by G. E. Baxter in his 1960 work \cite{baxter1960}, where it arose from the study of fluctuation theory in probability and provided an algebraic foundation for Spitzer’s identity. Ten years later, Gian-Carlo Rota recognized its profound combinatorial significance, placing this operator at the heart of an algebraic theory of summation \cite{rota1969}. The identity  
\[
P(x)P(y) = P\bigl(P(x)y + xP(y) + \lambda xy\bigr),
\]
now known as the Rota--Baxter identity of weight \(\lambda\), has since become a fundamental structure in several areas of mathematics. The first monograph devoted to Rota–Baxter algebras appeared in 2011 \cite{guo:intro}.

In recent years, the theory of Rota--Baxter operators has been actively developing. In 2020, the notion of a Rota--Baxter operator on a group was introduced \cite{guo2021}, and in the same year, a definition of a Rota--Baxter operator on a cocommutative Hopf algebra was given \cite{goncharov2021}. We also mention recent classification results devoted to the description of such operators on specific algebras: on $M_2(F)$ see \cite{benito2018}, on $M_3(F)$ see \cite{goncharov2020}. Of course, this is not an exhaustive list of works devoted to classification; we hope that readers interested in this topic will find further references in the cited papers.

The Sweedler algebra $H_4$ is the four-dimensional algebra generated by two elements $x$ and $g$ subject to the relations  
\[
x^2 = 0, \quad g^2 = 1, \quad \text{and} \quad gx = -xg.
\]  
A standard basis is $\{1, g, x, gx\}$.

We begin with the main definition. Let $F$ be a field with $\mathrm{char}(F) \neq 2$, and let $A$ be a $F$-algebra.

\begin{definition}
A linear operator $R: A \to A$ is called a {\normalfont\bf Rota--Baxter operator of weight} $\lambda \in F$ if  
\begin{equation*} \label{idn:RB} \tag{RB}
R(a)R(b) = R(R(a)b + aR(b) + \lambda ab)
\end{equation*}
for all $a, b \in A$.
\end{definition}

The zero operator and the operator $-\lambda \operatorname{id}$ are called trivial. We recall the following basic properties of Rota--Baxter operators \cite{guo:intro}. Let $R$ be a Rota--Baxter operators of weight $\lambda$. Then:
\begin{enumerate*}[itemjoin={\,}]
\item the operator $-\lambda \operatorname{id} - R$ is also a Rota--Baxter operator of weight $\lambda$;

\item for any (anti)automorphism $\varphi \colon A \to A$, the operator $\varphi^{-1}R\varphi$ is again a Rota--Baxter operator of weight $\lambda$.
\end{enumerate*}

In the paper \cite{ma:r-bo}, a description of Rota--Baxter operators on the algebra $H_4$ was given:
\begin{enumerate}[label=(\alph*),itemjoin={\\*[\smallskipamount]}]
\item 
$
R(1) = 0, \ R(g) = 0, \ R(x) = -\lambda x, \ R(gx) = -\lambda gx.
$ \medskip

\item 
$
R(1) = -\lambda 1, \ R(g) = -\lambda g, \ R(x) = 0, \ R(gx) = 0.
$ \medskip

\item 
$
R(1) = -\lambda 1, \ R(g) = -\lambda g, \ R(x) = -\lambda x, \ R(gx) = -\lambda gx.
$ \medskip

\item $ R(1) = 0, \ R(g) = -p_1\, 1 + p_1\, g - \frac{(\lambda + p_1)(\lambda + p_1 + p_2)}{p_3}\, x + \frac{(\lambda + p_1)(\lambda + p_2)}{p_3}\, gx, \\ 
R(x) = -p_3\, 1 + p_3\, g - (2\lambda + p_1 + p_2)\, x + (\lambda + p_2)\, gx, \\ 
R(gx) = -p_3\, 1 + p_3\, g - (\lambda + p_1 + p_2)\, x + p_2\, gx. $ \medskip

\item $
R(1) = -\lambda 1, \
R(g) = (\lambda + p_1)\, 1 + p_1\, g - \frac{(\lambda + p_1)(\lambda + p_1 + p_2)}{p_3}\, x + \frac{(\lambda + p_1)(\lambda + p_2)}{p_3}\, gx, \\
R(x) = p_3\, 1 + p_3\, g - (2\lambda + p_1 + p_2)\, x + (\lambda + p_2)\, gx, \\
R(gx) = p_3\, 1 + p_3\, g - (\lambda + p_1 + p_2)\, x + p_2\, gx.
$ \medskip

\item $ 
R(1) = -\lambda 1, \
R(g) = \lambda 1 + p_1\, x + \frac{p_1 p_2}{\lambda + p_2}\, gx, \\
R(x) = -(\lambda + p_1)\, x - p_2\, gx, \\
R(gx) = (\lambda + p_2)\, x + p_2\, gx. $ \medskip 

\item $
R(1) = -\lambda 1, \
R(g) = \lambda 1 + \frac{\lambda(\lambda + p_1)}{p_2}\, x + \frac{\lambda(\lambda + p_1)}{p_2}\, gx, \\
R(x) = -p_2 1 -p_2\, g - (2\lambda + p_1)\, x - (\lambda + p_1)\, gx, \\
R(gx) = p_2 1 + p_2\, g + (\lambda + p_1)\, x + p_1\, gx. $ \medskip

\item $
R(1) = \frac{\lambda}{2}\, 1 - \frac{\lambda}{2}\, g + p_1\, x + p_2\, gx, \
R(g) = \frac{\lambda}{2}\, 1 - \frac{\lambda}{2}\, g - p_2\, x + p_1\, gx, \\
R(x) = -\frac{\lambda}{2}\, x - \frac{\lambda}{2}\, gx, \
R(gx) = -\frac{\lambda}{2}\, x - \frac{\lambda}{2}\, gx.$
\end{enumerate}

The present article is organized as follows. In the first section, we describe all subalgebras of dimensions 3 and 2. In the subsequent sections, we classify Rota--Baxter operators on the Sweedler algebra according to the dimension of their kernel. In the final section, we compute the dual operators and present a complete list of Rota--Baxter operators up to conjugation and dualization. Note that the operators (h) is a Rota--Baxter operator if and only if $p_1 = 0$. 

\section{Subalgebras of $H_4$}

Let $A$ be a $F$-algebra. If \( S \subset A \), we denote by \( \langle S \rangle \) the subalgebra generated by the set \( S \). Also if elements $e_1, \dots , e_n \in A$ are linearly independent and $S = \{e_1, \dots , e_n\}$ we just write
$$
\langle S \rangle
=
\langle e_1, \dots , e_n \rangle
\quad
\text{or}
\quad
\langle S \rangle
=
Fe_1 + \dots +Fe_n.
$$
We now proceed to the description of subalgebras.

\begin{lemma}
Let $L \subset H_4$ be a $3$-dimensional subalgebra. Then $L$ is one of the following spaces:
\[
\langle 1 + \sigma g, x, gx \rangle, \quad
\langle 1, g + y_3 x + y_4 gx, x + \sigma gx \rangle, \quad
\langle 1, x, gx \rangle,
\]
where $\sigma \in F$ and $\sigma^2 = 1$. 
\end{lemma}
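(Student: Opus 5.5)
The plan is to use the radical $N = Fx + Fgx$ of $H_4$ and to split cases according to how a $3$-dimensional subalgebra $L$ meets $N$. Note that $N$ is a two-sided ideal with $N^2 = 0$ (since $x^2=0$ and $x\cdot gx = -gx^2 = 0$, etc.), and $H_4/N \cong F[g]/(g^2-1) \cong F\times F$ because $\mathrm{char}(F)\neq 2$. Let $\pi\colon H_4\to H_4/N$ be the projection. Since $\dim H_4/N = 2$ and $\dim L = 3$, we have $\dim(L\cap N)\ge 1$. So either $L\supseteq N$ or $\dim(L\cap N)=1$. Throughout we do not assume that $L$ contains $1$; this is exactly why the family $\langle 1+\sigma g, x, gx\rangle$ appears.

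\emph{Case $N\subseteq L$.} Then $L = N + Fu$ with $u = a + bg \neq 0$ (we may discard the $N$-component). The condition is that $\pi(L) = F\pi(u)$ is closed under multiplication, i.e.\ $u^2 = (a^2+b^2) + 2ab\,g$ is proportional to $a + bg$. This gives $(a^2+b^2)b = 2a^2 b$, so either $b = 0$ or $a^2 = b^2$. If $b=0$ we get $L = \langle 1, x, gx\rangle$. If $b\neq 0$ and $a = \sigma b$ with $\sigma^2=1$ (this includes ruling out $a=0$, which would give $g^2=1\notin Fg+N$), we get $L=\langle 1+\sigma g, x, gx\rangle$ after rescaling. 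Conversely, both spaces are closed under multiplication, because $N$ is an ideal and $(1+\sigma g)^2 = 2(1+\sigma g)$.

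\emph{Case $\dim(L\cap N)=1$.} Then $\pi(L) = H_4/N$, so $L$ contains elements $1+n_1$ and $g+n_2$ with $n_1,n_2\in N$. From $(1+n_1)^2 = 1+2n_1\in L$ we get $n_1\in L$, hence $1\in L$. Write $L\cap N = Fn$ with $n=\alpha x+\beta gx$. Since $L\cap N$ is a left ideal of $L$ and $g+n_2\in L$, the element $(g+n_2)n = gn = \beta x+\alpha gx$ must lie in $Fn$. This forces $\alpha^2=\beta^2$, so after scaling $n = x+\sigma gx$ with $\sigma^2=1$.

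It remains to verify closure for $L=\langle 1, g+y_3x+y_4gx, x+\sigma gx\rangle$. The one computation I would write out is that $g m + m g = 0$ for every $m\in N$: this holds because $gx=-xg$ and $g(gx) = x = -(gx)g$. From this we get $(g+m)^2 = 1\in L$. Also $n(g+m) = ng = -gn\in Fn$, $(g+m)n = gn\in Fn$, and $n^2=0$. The parameters $y_3,y_4$ are free, with the redundancy modulo $Fn$, which need not be normalized for the statement.

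The main obstacle is the first case. There one must be careful not to assume unitality, and must correctly solve the proportionality condition for $u^2$ in $F\times F$ so that no idempotent line is missed.
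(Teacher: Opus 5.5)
Your proof is correct, but it is organised differently from the paper's.

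The paper works with the subspace $I = F1 + Fx + Fgx$, which is a subalgebra but not an ideal. It splits on whether $\dim(L\cap I)$ is $2$ or $3$, and inside the first case on whether $1\in L$. Each case is settled by matching coefficients of explicitly chosen basis vectors $e_1,e_2,e_3$, using $e_3^2$, $e_3e_2$ and the commutator $e_3e_2-e_2e_3$.

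You instead split on $L\cap N$ for the square-zero radical $N=Fx+Fgx$ and pass to $H_4/N\cong F\times F$. The two partitions match up. Your case $N\subseteq L$ covers both $L=I$ and the paper's case $1\notin L$. Your case $\dim(L\cap N)=1$ is exactly the paper's case $\dim(L\cap I)=2$ with $1\in L$.

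Your route gains conceptual clarity in two places:
\begin{enumerate}
\item The case $N\subseteq L$ reduces to finding the multiplicatively closed lines in $F\times F$. These are the lines spanned by the idempotents $1$ and $\tfrac12(1\pm g)$, so $\langle 1,x,gx\rangle$ and $\langle 1+\sigma g,x,gx\rangle$ come out of a single computation.
\item The identity $(1+n_1)^2=1+2n_1$ shows at once that any subalgebra surjecting onto $H_4/N$ contains $1$. The paper establishes the corresponding facts (that $y_1^2=1$ and that $L\cap I=\langle x,gx\rangle$ when $1\notin L$) through tersely stated coefficient comparisons.
\end{enumerate}
You also check the converse, that the listed spaces really are subalgebras, which the paper leaves implicit. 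The paper's coordinate approach has the advantage of matching its treatment of the $2$-dimensional case. Your method would adapt equally well there, simply by also allowing $\dim(L\cap N)=0$.
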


\begin{proof}
Let $I = F1 + Fx + Fgx$. There are two cases: either $\dim (L \cap I) = 2$ or $\dim (L \cap I) = 3$. Suppose that $\dim (L \cap I) = 2$. Therefore, there are exists two nonzero elements $e_1, e_2 \in L \cap I$ that span $L \cap I$. Choose $e_3 = y_1 1 + y_2 g + y_3 x + y_4 gx \in H_4$ such that $L = \langle e_1, e_2, e_3 \rangle$. If one of the elements $e_1, e_2$ is equal to 1, say $e_1 = 1$, then $y_2 \neq 0$; otherwise contradicting $\dim (L \cap I) = 2$. Let $e_2 = x_3 x + x_4 gx$. Since $e_3 e_2 = x_4 x + x_3 gx$ belongs to $L$, then $x_4 = \sigma x_3, \ x_3 = \sigma x_4$. So, $e_2 = x + \sigma gx, \ \sigma^2 = 1$. 

Now suppose that $1 \notin L \cap I$. Again $y_2 \neq 0$; and $y_1 \neq 0$, otherwise $e_3^2 = 1$, but $1 \notin L$. So, $e_3 = y_1 1 + g + y_3 x + y_4 gx$ and $e^2_3 = (y_1^2 + 1) 1 + 2y_1 g + 2 y_1 y_3 x + 2 y_1 y_4 gx$. Therefore, $y_1^2 + 1 = \sigma y_1, \ 2y_1 = \sigma$, consequently, $y_1^2 = 1$. Let $e_2 = x_1 1 + x_3 x + x_4 gx$. Since the element $e_3e_2 - e_2e_3 = 2x_4 x + 2x_3 gx$ belongs to $L$ and $1 \notin L \cap I$, then $e_1, e_2 \in \langle x, gx \rangle$. We can put $e_1 = x$ and $e_2 = gx$. Thus, we obtain subalgebra $\langle 1 + \sigma g, x, gx \rangle, \ \sigma^2 = 1$. 

If $\dim (L \cap I) = 3$, then $L = I = \langle 1,x,gx \rangle$. This conclude the proof. 
\end{proof}

\begin{lemma}
Let $L \subset H_4$ be a $2$-dimensional subalgebra. Then $L$ is one of the following spaces:
\[
\langle 1 + \sigma g + y_3 x + y_4 gx, x + \mu gx \rangle, \quad 
\langle x, gx \rangle, \quad 
\langle 1, x_2 g + x_3 x + x_4 gx \rangle,
\]
where $\sigma, \mu \in F$ and $\sigma^2 = 1, \mu^2 = 1$. 
\end{lemma}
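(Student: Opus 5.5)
I will follow the strategy of the previous lemma and split according to $\dim(L \cap I)$, where again $I = F1 + Fx + Fgx$. Since $\dim L = 2$ and $\dim I = 3$ inside the $4$-dimensional $H_4$, we have $\dim(L\cap I) \in \{1,2\}$. Two facts about $I$ will be used repeatedly. First, $J = Fx + Fgx$ satisfies $J^2 = 0$. Second, $g$ acts on $J$ by $g\cdot x = gx$ and $g \cdot gx = x$, while $x\cdot g = -gx$ and $gx \cdot g = -x$.

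\emph{Case $\dim(L\cap I)=2$.} Then $L \subset I$, i.e.\ $L$ is a $2$-dimensional subalgebra of $I = F1 \oplus J$. If $1 \in L$, then $L = \langle 1, x_3x + x_4 gx\rangle$, which is the third family with $x_2 = 0$. If $1\notin L$, take $e = a1 + j \in L$ with $a \neq 0$ and $j \in J$. Since $e^2 = a^2 1 + 2aj$, the elements $e^2, e$ must be linearly dependent modulo the relation forced by $1 \notin L$, so $e^2 - 2ae = -a^2 1 \in L$, a contradiction. Hence $L \subset J$, and by dimension $L = \langle x, gx\rangle$.

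\emph{Case $\dim(L\cap I)=1$.} Write $L = \langle e_1, e_3\rangle$ with $e_1 \in I$ and $e_3 = y_11 + y_2 g + y_3x + y_4gx$, where $y_2\neq 0$; normalize so that $y_2 = 1$.

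If $1 \in L$, we may take $e_1 = 1$, and then $L = \langle 1, g + y_3x + y_4 gx\rangle$. This is the third family with $x_2 \ne 0$; indeed any span $\langle 1, e\rangle$ is closed because $e^2 \in \langle 1, e\rangle$ holds automatically for $e = g + j$, since $(g+j)^2 = 1 + gj + jg = 1$.

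If $1 \notin L$, I will try to show $e_1 \in J$. Write $e_1 = a1 + j$. If $a \neq 0$, the product $e_1^2 = a^2 1 + 2aj$ must lie in $L \cap I = Fe_1$. This forces $a^2 1 + 2aj = \beta(a1 + j)$, which gives $j = 0$ and hence $e_1 \in F1$, a contradiction. So $e_1 = x_3x + x_4gx$. Then $e_3e_1 = y_1e_1 + (x_4 x + x_3 gx)$, and the element $x_4x + x_3gx$ lies in $L \cap I = Fe_1$. Exactly as in the preceding lemma, this gives $e_1 = x + \mu gx$ with $\mu^2 = 1$. Next, $e_3^2 = (y_1^2 + 1)1 + 2y_1 g + 2y_1(y_3x + y_4 gx)$ must lie in $L$. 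Comparing the coefficients of $1$ and $g$ against $\gamma e_3 + \delta e_1$ gives $y_1^2 + 1 = \gamma y_1$ and $2y_1 = \gamma$, hence $y_1^2 = 1$. Note that $y_1 \neq 0$, since otherwise $e_3^2 = 1 \in L$. Dividing by $y_1$ puts $e_3$ in the form $1 + \sigma g + y_3' x + y_4' gx$ with $\sigma = y_1^{-1}$, so $\sigma^2 = 1$. Finally, one checks that these spaces are indeed closed under multiplication; the $J$-component of $e_3^2$ is proportional to $e_3$'s and $e_1$ absorbs the rest.

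The main obstacle is bookkeeping rather than any conceptual difficulty. One must treat the degenerate situation where the $2$-dimensional subalgebra does not contain $1$, and remember that the third family's parameter $x_2$ is allowed to vanish, which absorbs the case $L = \langle 1, j\rangle$ with $j \in J$. I would also double-check that the requirement $y_1 \neq 0$ correctly excludes $1 \in L$ in the last subcase.
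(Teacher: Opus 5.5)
Your proof is correct and follows essentially the same route as the paper. You split on $\dim(L\cap I)\in\{1,2\}$, exclude a nonzero $1$-component of $e_1$ via $e_1^2\in Fe_1$, get $e_1=x+\mu gx$ from the product of $e_1$ with $e_3$, and get $y_1^2=1$ from the $1$- and $g$-coefficients of $e_3^2$. The differences are minor: you treat $e_1$ before $e_3$, you keep the $\delta e_1$ term that the paper drops, and you actually justify the case $1\notin L\subset I$ via $e^2-2ae=-a^2 1$, which the paper only asserts; your closing closure check is not needed, since the lemma only claims necessity.
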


\begin{proof}
Let $I = F1 + Fx + Fgx$. There are two possibilities: either $\dim(L \cap I) = 1$ or $\dim(L \cap I) = 2$. Assume first that $\dim(L \cap I) = 1$. Then there exists a nonzero element $e_1 \in L \cap I$. If $e_1 = 1$, then clearly $L = \langle 1, x_2 g + x_3 x + x_4 gx \rangle$. Otherwise, write $e_1 = x_1 1 + x_3 x + x_4 gx$ with $e_1 \ne 1$. Choose $e_2 = y_1 1 + y_2 g + y_3 x + y_4 gx \in H_4$ such that $L = \langle e_1, e_2 \rangle$. We may assume $y_2 \ne 0$ (and normalize $y_2 = 1$); otherwise $L \subseteq I$, contradicting $\dim(L \cap I) = 1$.

Now, the condition $e_1^2 = \alpha e_1$ yields
\[
\begin{cases*}
x_1(x_1 - \alpha) = 0, \\
x_3(2x_1 - \alpha) = 0, \\
x_4(2x_1 - \alpha) = 0,
\end{cases*}
\quad \Longleftrightarrow \quad x_1 = \alpha = 0,
\]
the last equivalence following from $e_1 \ne 1$. Similarly, expanding $e_2^2$ in the basis of $L$ gives $e_2^2 = \beta e_2$, which forces $e_2 = 1 + \sigma g + y_3 x + y_4 gx$ with $\sigma^2 = 1$. Finally, requiring $e_1 e_2 \in L$ implies $e_1 = x + \mu gx$ with $\mu^2 = 1$.

Now suppose $\dim(L \cap I) = 2$. If $1 \in L$, we obtain $L = \langle 1, x_2 x + x_3 gx \rangle$, which is already covered by the previous case. If $1 \notin L$, then $L = \langle x, gx \rangle$.
\end{proof}

Let $\operatorname{Aut}(H_4)$ be automorphism group of $H_4$ (as associative algebra). Show that any $\varphi \in \operatorname{Aut}(H_4)$ acts on the generators $x$ and $g$ as follows:
\[
\begin{array}{rl}
\varphi(g) = \varepsilon g + a x + b gx, \ & \varphi(x) = p x + q gx, \\[.5em]
a,b,p,q,\varepsilon \in F, \ & \varepsilon = \pm 1, \ p^2 - q^2 \neq 0.
\end{array}
\]
Since $\varphi(1) = 1$, then $\varphi(g)\varphi(g) = 1$. So, $\varphi(g) = \varepsilon g + a x + b gx$, $a,b,\varepsilon \in F$ and $\varepsilon = \pm 1$. It's quite obvious that $\varphi(x)\varphi(x) = 0$. Thus $\varphi(x) = p x + q gx, p, q \in F$. Finally $\varphi(gx) = \varphi(g)\varphi(x) = \varepsilon q x + \varepsilon p gx$ and $\det (\varphi) = p^2 - q^2 \neq 0$.  

We now show that, up to isomorphism, there are exactly three distinct $3$-dimensional subalgebras of $H_4$. Indeed, the automorphism defined by $\varphi(g) = -g$, $\varphi(x) = x$ identifies $\langle 1+g, x, gx \rangle$ with $\langle 1-g, x, gx \rangle$. The subalgebra $\langle 1,g,x-gx \rangle$ isomorphic to $\langle 1,g+y_3x+y_4gx,x+\sigma gx \rangle$ under the action of automorphism:
$$
f(g) = g + y_3 x + y_4 gx, \ f(x) = \dfrac{1+\sigma}{2} x + \dfrac{\sigma-1}{2} gx. 
$$

Turning to $2$-dimensional subalgebras, the algebras $\langle 1 + \sigma g + y_3 x + y_4 gx,\, x - \sigma gx \rangle$ are isomorphic to $\langle 1-g, x+gx \rangle$ via the automorphisms $f$ given by
\[
\begin{array}{rl}
f(g) = -\sigma g + a x + b x, & f(x) = (1-q)x + q gx, \\[.5em]
a = (y_4 + \sigma y_3)q - \sigma y_3, & b = y_4 - (y_4 + \sigma y_3)q, \quad q \neq 2^{-1}.
\end{array}
\]
Similarly, the subalgebras $\langle 1 + \sigma g + y_3 x + y_4 gx,\, x + \sigma gx \rangle$ are isomorphic to $\langle 1-g, x-gx \rangle$ under the automorphisms $\widetilde{f}$ defined by
\[
\begin{array}{rl}
\widetilde{f}(g) = -\sigma g + a x + b gx, & \widetilde{f}(x) = (1+q)x + q gx, \\[.5em]
a = (y_4 - \sigma y_3)q - \sigma y_3, & b = y_4 + (y_4 - \sigma y_3)q, \quad q \neq -2^{-1}.
\end{array}
\]
Note that subalgebra $\langle 1-g, x-gx \rangle$ is isomorphic to $\langle 1-g, x+gx \rangle$ under the antiautomorphism $\psi(g) = g, \ \psi(x) = x$. 

It remains to consider subalgebras of the form $\langle 1, x_2 g + x_3 x + x_4 gx \rangle$. Since $\langle 1, g \rangle \cong \langle 1, g + a x + b gx \rangle$ for any $a, b \in k$, it follows that if $x_2 \ne 0$, then $\langle 1, x_2 g + x_3 x + x_4 gx \rangle \cong \langle 1, g \rangle$. If $x_2 = 0$, then the subalgebra is isomorphic either to $\langle 1, x + C gx \rangle$ or to $\langle 1, C x + gx \rangle$. In fact, these two are isomorphic to each other, and we have
\begin{align*}
\langle 1, x + C gx \rangle \cong \langle 1, x - gx \rangle &\iff C^2 = 1, \\
\langle 1, x + C gx \rangle \cong \langle 1, x \rangle &\iff C^2 \ne 1.
\end{align*}
Thus, we have obtained the following lemma.

\begin{lemma}
Up to isomorphism, \(H_4\) contains the following $2$-dimensional subalgebras: 
\[
\langle 1 - g, x - gx \rangle, \quad 
\langle 1, g \rangle, \quad 
\langle 1, x - gx \rangle, \quad 
\langle 1, x \rangle, \quad 
\langle x, gx \rangle;
\]
and the following $3$-dimensional subalgebras: 
\[
\langle 1 - g, x, gx \rangle, \quad
\langle 1, g, x - gx \rangle, \quad
\langle 1, x, gx \rangle.
\]
\end{lemma}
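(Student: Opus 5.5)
The argument is a reduction: start from the explicit lists in Lemmas 1 and 2, and use the automorphisms of $H_4$ described above, together with the antiautomorphism $\psi$, to bring every family to one of the listed normal forms. Here ``up to isomorphism'' means up to the action of algebra (anti)automorphisms of $H_4$. I would also check that the listed representatives are pairwise inequivalent. This is not strictly needed for the statement, but it makes it sharp, and it can be done with intrinsic invariants (existence of a unit, nilpotency, dimension of the radical).

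\textbf{Three-dimensional subalgebras.} By Lemma 1 it suffices to handle $\langle 1+\sigma g, x, gx\rangle$ and $\langle 1, g+y_3x+y_4gx, x+\sigma gx\rangle$.
\begin{enumerate}
\item For the first family, the automorphism $g\mapsto -g$, $x\mapsto x$ interchanges $\sigma=1$ and $\sigma=-1$. So every such subalgebra is equivalent to $\langle 1-g,x,gx\rangle$.
\item For the second family, I use the automorphism $f$ written above. I would verify directly that $f$ is an automorphism: $f(g)^2=1$, $f(x)^2=0$, $f(g)f(x)=-f(x)f(g)$, and the determinant is $p^2-q^2=-\sigma\neq 0$.
\item I would then check that $f(x-gx)=x+\sigma gx$. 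This shows that $f$ maps $\langle 1,g,x-gx\rangle$ onto $\langle 1,g+y_3x+y_4gx,x+\sigma gx\rangle$.
\end{enumerate}

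\textbf{Two-dimensional subalgebras.} The families in Lemma 2 are treated one at a time.
\begin{enumerate}
\item For $\langle 1+\sigma g+y_3x+y_4gx,\, x\mp\sigma gx\rangle$, I use the automorphisms $f,\widetilde f$ given above. The routine check is that $f(1-g)=1+\sigma g+y_3x+y_4gx$ with the stated $a,b$, and that $f(x+gx)$ is proportional to $x-\sigma gx$. The constraint $q\ne 1/2$ guarantees $\det\ne0$.
\item The antiautomorphism $\psi$ (identity on generators, reversing products) sends $gx$ to $xg=-gx$. Hence it identifies $\langle 1-g,x+gx\rangle$ with $\langle 1-g,x-gx\rangle$.
\item For $\langle 1, x_2g+x_3x+x_4gx\rangle$ with $x_2\ne0$, rescale to $x_2=1$. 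The map $g\mapsto g+x_3x+x_4gx$, $x\mapsto x$ is an automorphism, so the subalgebra is equivalent to $\langle 1,g\rangle$.
\item If $x_2=0$, the subalgebra is $\langle 1, x_3x+x_4gx\rangle$. Applying $\varphi(x)=px+qgx$ to $x$ gives $px+qgx$, and applying it to $x-gx$ gives $(p-q)(x-gx)$. The image of $\langle 1,x\rangle$ is therefore $\langle 1, px+qgx\rangle$ with $p^2\ne q^2$. The cases $x_3^2=x_4^2$ cannot arise this way; they give $\langle 1, x\pm gx\rangle$. The map $g\mapsto -g$ sends $x+gx$ to $x-gx$, so these are all equivalent to $\langle 1,x-gx\rangle$.
\item Finally, $\langle x,gx\rangle$ stays as it is.
\end{enumerate}

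\textbf{Main obstacle.} The hard step is distinguishing $\langle 1,x\rangle$ from $\langle 1,x-gx\rangle$, since both are isomorphic to $F[t]/(t^2)$ as abstract algebras. I would use the invariant that automorphisms preserve the quadratic form $p^2-q^2$ up to the scalar $\det$. Antiautomorphisms behave similarly: they can be written as $\psi$ composed with an automorphism, and $\psi$ preserves the isotropy condition $p^2=q^2$. Hence the isotropic lines in $\langle x,gx\rangle$ form an orbit separate from the non-isotropic ones, and $\langle 1,x\rangle$ and $\langle 1,x-gx\rangle$ are not equivalent. The remaining pairs are separated by simpler invariants: whether the subalgebra contains $1$, whether it is nilpotent, and the dimension of its radical.
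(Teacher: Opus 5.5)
Your route is the paper's. You take the lists of Lemmas 1 and 2 and move each family to a normal form using the explicit automorphisms and the antiautomorphism $\psi$. Your inequivalence argument is a genuine addition: the form $Q(ux+vgx)=u^2-v^2$ is rescaled by $p^2-q^2$ under automorphisms and preserved by $\psi$, and you supplement it with the unit, nilpotency and radical invariants. The paper only asserts the two ``$\iff$''s. However, two of the verifications you plan copy the formulas in the text, and as stated they fail.

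\textbf{First failing check.} In the second $3$-dimensional family, the map $f$ has $\varepsilon=1$. As you observe yourself in the $2$-dimensional part, every automorphism with $\varepsilon=1$ sends $x-gx$ to $(p-q)(x-gx)$. Here $p-q=1$, so $f(x-gx)=x-gx$ for both values of $\sigma$. For instance, when $\sigma=1$ one has $f(x)=x$ and $f(gx)=f(g)x=gx$. So the check $f(x-gx)=x+\sigma gx$ fails when $\sigma=1$, and no automorphism with $\varepsilon=1$ can do the job. (Also, $p^2-q^2=\sigma$, not $-\sigma$.) Use $\varepsilon=-\sigma$ instead, e.g. $f(g)=-\sigma(g+y_3x+y_4gx)$, $f(x)=x$. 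Then $f(gx)=-\sigma gx$ and $f(x-gx)=x+\sigma gx$, so $f$ maps $\langle 1,g,x-gx\rangle$ onto $\langle 1,g+y_3x+y_4gx,x+\sigma gx\rangle$.

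\textbf{Second failing check.} The maps $f,\widetilde f$ for the $2$-dimensional families go in the direction opposite to the one you check. With the stated $a,b$ one has $f(1+\sigma g+y_3x+y_4gx)=1-g$ and $f(x-\sigma gx)=x+gx$. Likewise $\widetilde f(1+\sigma g+y_3x+y_4gx)=1-g$ and $\widetilde f(x+\sigma gx)=x-gx$. By contrast, $f(1-g)=1+\sigma g-ax-bgx$ is in general not even in the target. For $\sigma=1$, $y_3=0$, $y_4=1$, $q=0$ it equals $1+g-gx$, which is not in $\langle 1+g+gx,\,x-gx\rangle$. You can either verify the reversed identities, or use the single automorphism $g\mapsto-\sigma g-y_3x-y_4gx$, $x\mapsto x$. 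This map sends $1-g\mapsto 1+\sigma g+y_3x+y_4gx$ and $x\pm gx\mapsto x\mp\sigma gx$, so it covers both families at once; then finish with $\psi$ as you do. With these corrections the argument is complete.
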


We are now ready to proceed with the classification of Rota--Baxter operators.

\section{Dimension of kernel is 3}

Here and throughout, we consider operators of nonzero weight ($\lambda \neq 0$).

\begin{theorem}
Let $\ker R = \langle 1-g, x, gx \rangle$. Then we have the following operators:\\*[\smallskipamount]
\begin{enumerate*}[label=\normalfont(\arabic*),itemjoin={\quad}]
\item $R(g) = -\lambda 1$,

\item $R(g) = -\frac{\lambda}{2} 1 - \frac{\lambda}{2} g + \gamma_g x + \delta_g gx$, 
	
\item $R(g) = \frac{\lambda}{2} 1 - \frac{\lambda}{2} g + \gamma_g x + \delta_g gx$.
\end{enumerate*}
\end{theorem}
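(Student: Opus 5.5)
Since $\ker R = \langle 1-g, x, gx\rangle$ has codimension one and $g\notin\ker R$, we have $H_4 = \ker R \oplus Fg$. Hence $R$ is completely determined by the single element $r := R(g) = a1 + bg + cx + d\,gx$, which must be nonzero. Writing $h = \alpha 1 + \beta g + \gamma x + \delta gx = \alpha(1-g) + \gamma x + \delta gx + (\alpha+\beta)g$, we get $R(h) = t(h)\, r$, where $t(h) := \alpha + \beta$ is the linear functional ``sum of the coefficients of $1$ and $g$''. The plan is to turn (RB) into conditions on $r$ alone, by checking (RB) on pairs of elements taken from $\ker R$ and $Fg$, and then to solve the resulting small polynomial system.

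\textbf{Step 1 (both arguments in $\ker R$).} Here (RB) reduces to $0 = \lambda R(kk')$, which holds because $\ker R$ is a subalgebra (Lemma on $3$-dimensional subalgebras).

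\textbf{Step 2 (one argument in $\ker R$, the other equal to $g$).} We need $t(kr + \lambda kg) = 0$ and $t(rk + \lambda gk) = 0$. For $k = x, gx$ these products lie in $Fx + Fgx$, so $t$ vanishes on them. For $k = 1-g$ one computes $(1-g)(a+bg) = (a-b)(1-g)$ and $(1-g)g = -(1-g)$, whose $t$-values are $0$, and the nilpotent parts again contribute nothing. So these mixed conditions are automatic.

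\textbf{Step 3 (both arguments equal to $g$).} This is the only real constraint: $r^2 = R(rg + gr + \lambda 1) = s\, r$ with $s = 2(a+b) + \lambda$, using $t(rg) = t(gr) = a+b$. Expanding,
\[
r^2 = (a^2+b^2)1 + 2ab\, g + 2a(cx + d\,gx),
\]
since the cross terms $bg\cdot cx + cx\cdot bg$ and their $gx$-analogues cancel by $gx = -xg$. Comparing coefficients gives the system
\[
a^2+b^2 = sa, \quad 2ab = sb, \quad 2ac = sc, \quad 2ad = sd.
\]

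\textbf{Step 4 (solving the system).} Split on whether $b$ vanishes.

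If $b \neq 0$, then $s = 2a$, so $a^2 = b^2$. The definition of $s$ then gives $2a = 2a + 2b + \lambda$, i.e.\ $b = -\lambda/2$ and $a = \pm\lambda/2$. The last two equations are automatic since $s = 2a$, so $c = \gamma_g$ and $d = \delta_g$ are free. This yields cases (2) and (3).

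If $b = 0$, then $a(a - s) = 0$ with $s = 2a + \lambda$.
\begin{itemize}
\item If $a \neq 0$, then $a = -\lambda$ and $s = -\lambda \neq 2a$, forcing $c = d = 0$. This yields case (1).
\item If $a = 0$, then $s = \lambda \neq 0$ forces $c = d = 0$, so $r = 0$, which is excluded.
\end{itemize}

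The main obstacle is purely computational: getting the expansion of $r^2$ and the value of $t(rg+gr)$ right, since sign slips coming from $gx = -xg$ would change the answer. I would double-check this expansion by verifying the final operators directly against (RB).
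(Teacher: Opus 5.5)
Your proof is correct and follows essentially the paper's route. The only substantive constraint is (RB) on the pair $(g,g)$, which yields exactly the paper's system. Your split on $b\neq 0$ versus $b=0$ is equivalent to the paper's split on $2\beta_g+\lambda$, since $b(2b+\lambda)=0$ and $\lambda\neq 0$. Your Steps 1--2, which check the mixed pairs so that the listed operators really do satisfy (RB), and your explicit use of $r\neq 0$ to rule out $a=0$, make explicit what the paper leaves implicit.
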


\begin{proof}
Let $R(g) = \alpha_g 1 + \beta_g g + \gamma_g x + \delta_g gx$. Consider the Rota--Baxter identity for the pair $(g,g)$: 
\begin{align*}
\begin{array}{rllrll}
\alpha_g (2\beta_g + \lambda) &=& \beta_g^2 - \alpha_g^2, &
\beta_g (2\beta_g + \lambda) &=& 0, \\
\gamma_g (2\beta_g + \lambda) &=& 0, &
\delta_g (2\beta_g + \lambda) &=& 0. 
\end{array}
\end{align*}
Obviously, if $2\beta_g + \lambda = 0$, then we get the operators (2) and (3). Otherwise, if $2\beta_g + \lambda \ne 0$, then $\beta_g = \gamma_g = \delta_g = 0$ and $\alpha_g + \lambda = 0$. Thus we get the operator (1). 

Consider the automorphism 
$$
\varphi(g) = g - \dfrac{2\gamma_g}{\lambda} x - \dfrac{2\delta_g}{\lambda} gx, \ \varphi(x) = x.
$$ 
Let's show that the operators (2) and (3) with arbitrary $\gamma_g$ and $\delta_g$ are conjugate to the operators (2) and (3) with $\gamma_g = \delta_g = 0$. Compute the automorphism $\varphi^{-1}$:
$$
\varphi^{-1}(g) = g + \dfrac{2\gamma_g}{\lambda} x + \dfrac{2\delta_g}{\lambda} gx, \ \varphi^{-1}(x) = x.
$$
It's clear that if $R$ is the operator (2) or (3) then $\varphi^{-1}R\varphi(g) = \pm \frac{\lambda}{2} 1 - \frac{\lambda}{2} g.$
\end{proof}

\begin{theorem}
Let $\ker R = \langle 1, x, gx \rangle$. Then we have the following operators:\\*[\smallskipamount]
\begin{enumerate*}[itemjoin={\quad}]
\item[\normalfont(1a)] $R(g) = -\lambda 1 - \lambda g + \gamma_g x + \delta_g gx$, 

\item[\normalfont(1b)] $R(g) = \lambda 1 - \lambda g + \gamma_g x + \delta_g gx$.
\end{enumerate*}
\end{theorem}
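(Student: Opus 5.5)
Since $\ker R = \langle 1, x, gx\rangle$, the operator is determined by the single value $R(g) = \alpha_g 1 + \beta_g g + \gamma_g x + \delta_g gx$. Because $\ker R$ has dimension $3$, we must have $R(g)\neq 0$. I would write down the Rota--Baxter identity on all pairs of basis elements and read off the constraints on $\alpha_g,\beta_g,\gamma_g,\delta_g$, exactly as in the proof of the previous theorem.

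\textbf{Pairs involving $1$.} Take the pair $(1,1)$. Since $R(1)=0$, the identity gives $0 = R(\lambda 1) = 0$, which is automatic. For the pair $(1,g)$ the left side is $R(1)R(g)=0$, while the right side is
\[
R\bigl(R(g) + \lambda g\bigr) = R(R(g)) + \lambda R(g) = \beta_g R(g) + \lambda R(g) = (\beta_g+\lambda)R(g),
\]
where $R(R(g))=\beta_g R(g)$ because only the $g$-component of $R(g)$ survives. Since $R(g)\ne 0$, this forces $\beta_g = -\lambda$.

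\textbf{Pairs involving $x$ and $gx$.} For $(x,g)$ the left side is $R(x)R(g)=0$, while the right side is $R(xR(g)) + \lambda R(xg)$. Now $xg=-gx\in\ker R$, so the second term vanishes. Moreover $xR(g) = \alpha_g x + \beta_g xg$ lies in $\langle x, gx\rangle\subset\ker R$, so the first term also vanishes. The same argument disposes of $(g,x)$, $(g,gx)$ and $(gx,g)$, and pairs with both entries in $\ker R$ give nothing, since $\ker R$ is a subalgebra. Hence these pairs impose no condition.

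\textbf{The pair $(g,g)$.} This is the key computation. The left side is $R(g)^2$. On the right, $R(g)g + gR(g) + \lambda 1$ lies in $\ker R$ except for its $g$-component, which equals $2\alpha_g$ (coming from $\alpha_g g + g\alpha_g$). So the right side is $2\alpha_g R(g)$.

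On the other side, $R(g)^2$ has $1$-component $\alpha_g^2+\beta_g^2$, $g$-component $2\alpha_g\beta_g$, $x$-component $2\alpha_g\gamma_g$ and $gx$-component $2\alpha_g\delta_g$. The $x$ and $gx$ terms of the form $\beta_g\gamma_g(gx+xg)$ cancel because $gx=-xg$.

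Comparing components of $R(g)^2 = 2\alpha_g R(g)$:
\begin{itemize}
\item the $g$, $x$ and $gx$ components agree automatically;
\item the $1$-component gives $\alpha_g^2+\beta_g^2 = 2\alpha_g^2$, that is, $\alpha_g^2 = \beta_g^2 = \lambda^2$.
\end{itemize}
Hence $\alpha_g = \pm\lambda$, with $\gamma_g,\delta_g$ free. This yields exactly the operators (1a) and (1b).

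The main obstacle is only bookkeeping the products in $H_4$ correctly, in particular the sign cancellations coming from $gx=-xg$.
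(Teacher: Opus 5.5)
Your proof is correct and follows the paper's argument. As there, the pair $(1,g)$ forces $R(g)+\lambda g\in\ker R$, i.e.\ $\beta_g=-\lambda$, and the pair $(g,g)$ gives $\alpha_g^2=\beta_g^2$. You additionally verify that the remaining pairs impose no conditions, so both families really are Rota--Baxter operators; the paper instead goes on to show that (1a) and (1b) are conjugate and that $\gamma_g,\delta_g$ can be normalized to $0$, which the statement itself does not require.
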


\begin{proof}
Obviously, $0 = R(1)R(g) = R(R(g) + \lambda g)$. So, $R(g) + \lambda g \in \ker R$. Applying \eqref{idn:RB} to the pair $(g,g)$, we get $\alpha_g^2 = \beta_g^2$. Note that (1a) and (1b) lie in the same orbit under the action of the automorphism:
$$
f(g) = -g + \dfrac{2\gamma_g}{\lambda} x, \quad
f(x) = x. 
$$
Indeed, firstly, note that $f = f^{-1}$; secondly, if $R$ is the operator (1a) then
\begin{align*}
f^{-1}Rf(g) &= fRf(g) = fR(-g + \frac{2\gamma_g}{\lambda} x) \\
&= f(\lambda 1 + \lambda g - \gamma_g x - \delta_g gx) \\
&= \lambda 1 - \lambda g + \gamma_g x + \delta_g gx.
\end{align*}
So, we get the operator (1b).

Also note that under the automorphism
$$
\varphi(g) = g - \dfrac{\gamma_g}{\lambda} x - \dfrac{\delta_g}{\lambda} gx, \ \varphi(x) = x,
$$
the operator (1a) with arbitrary $\gamma_g$ and $\delta_g$ is conjugate to the operator (1a) with $\gamma_g = \delta_g = 0$. This concludes the proof. 
\end{proof}

\begin{theorem}
Let $\ker R = \langle 1, g, x-gx \rangle$. Then we have the following operators:\\*[\smallskipamount]
\begin{enumerate*}[label=\normalfont(\roman*),itemjoin={\quad}]
\item[\normalfont (1a)] $R(gx) = \alpha_{gx} 1 - \alpha_{gx} g + \gamma_{gx} x - (\lambda + \gamma_{gx}) gx$,

\item[\normalfont (1b)] $R(gx) = \alpha_{gx} 1 + \alpha_{gx} g + \gamma_{gx} x - (\lambda + \gamma_{gx}) gx$.
\end{enumerate*}
\end{theorem}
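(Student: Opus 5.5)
Since $\ker R = \langle 1, g, x-gx\rangle$ has codimension one, $R$ is determined by the single vector
$$
R(gx) = \alpha_{gx} 1 + \beta_{gx} g + \gamma_{gx} x + \delta_{gx} gx .
$$
Because $x = (x - gx) + gx$, we have $R(x) = R(gx)$. More generally,
$$
R(a_1 1 + a_2 g + a_3 x + a_4 gx) = (a_3 + a_4) R(gx).
$$
So an element lies in $\ker R$ exactly when the sum of its $x$- and $gx$-coefficients vanishes. I would impose \eqref{idn:RB} on pairs of basis elements $\{1, g, x-gx, gx\}$, split into those with at least one argument in the kernel and the single remaining pair $(gx,gx)$.

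\emph{Pairs involving the kernel.} For $a \in \ker R$ and any $b$, \eqref{idn:RB} reads $0 = R(aR(b) + \lambda ab)$, and symmetrically $0 = R(R(b)a + \lambda ba)$. It suffices to take $b = gx$. I would then check the three kernel generators as follows.
\begin{itemize}
\item For $a = 1$: the condition is $R(gx) + \lambda gx \in \ker R$, which gives $\gamma_{gx} + \delta_{gx} + \lambda = 0$, i.e.\ $\delta_{gx} = -(\lambda + \gamma_{gx})$.
\item For $a = g$: we need $gR(gx) + \lambda x$ and $R(gx)g - \lambda x$ (using $gxg = -x$) to lie in $\ker R$. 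Computing the $x$- and $gx$-coefficients of $gR(gx)$ and $R(gx)g$, both conditions again reduce to $\gamma_{gx} + \delta_{gx} + \lambda = 0$.
\item For $a = x - gx$: the products $(x-gx)R(gx)$ and $R(gx)(x-gx)$ involve only $\alpha_{gx}, \beta_{gx}$ times $x, gx$, since $x^2 = 0$ and the nilpotent part is killed. Likewise $(x-gx)gx$ and $gx(x-gx)$ are multiples of $x$ and $gx$ with opposite-sign coefficients. So this yields a linear condition on $\alpha_{gx}, \beta_{gx}, \lambda$ to be checked, which I expect to be automatically satisfied or to be absorbed by the next step.
\end{itemize}

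\emph{The pair $(gx,gx)$.} Here $(gx)^2 = 0$, so the right-hand side is $R\bigl(R(gx)gx + gxR(gx)\bigr) = c\,R(gx)$, where $c$ is the sum of the $x$- and $gx$-coefficients of $R(gx)gx + gxR(gx)$. Write $R(gx) = \alpha_{gx} + \beta_{gx} g + w$ with $w \in \langle x, gx\rangle$. Then
$$
R(gx)^2 = (\alpha_{gx}^2 + \beta_{gx}^2)1 + 2\alpha_{gx}\beta_{gx}\, g + (\text{terms in } x, gx),
$$
while $c$ is linear in $\alpha_{gx}, \beta_{gx}$ (the $w$-part contributes nothing because $w\,gx = gx\,w = 0$). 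I compute $c = 2\alpha_{gx}$. Comparing the coefficients of $1$ and $g$ gives
$$
\alpha_{gx}^2 + \beta_{gx}^2 = 2\alpha_{gx}^2, \qquad 2\alpha_{gx}\beta_{gx} = 2\alpha_{gx}\beta_{gx},
$$
hence $\beta_{gx}^2 = \alpha_{gx}^2$, i.e.\ $\beta_{gx} = \pm \alpha_{gx}$. This produces the two families (1a) and (1b). Finally I would verify that the $x$- and $gx$-coefficients match identically once $\delta_{gx} = -(\lambda + \gamma_{gx})$ and $\beta_{gx} = \pm\alpha_{gx}$ are imposed.

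\emph{Main obstacle.} The main difficulty is bookkeeping the signs from $gx = -xg$ when computing $R(gx)\,gx$, $gx\,R(gx)$, $gR(gx)$ and $R(gx)g$. In particular one must confirm that the two one-sided conditions from $a = x - gx$ impose nothing beyond $\beta_{gx}^2 = \alpha_{gx}^2$, and that they do not, for instance, force $\alpha_{gx} = 0$. If they did force $\alpha_{gx} = 0$, the families (1a) and (1b) would collapse. I would therefore carry out that computation explicitly before the $(gx,gx)$ comparison.
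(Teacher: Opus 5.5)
Your proposal is correct and follows the paper's argument: the pair $(1,gx)$ gives $\delta_{gx}=-(\lambda+\gamma_{gx})$ and the pair $(gx,gx)$ gives $\beta_{gx}^2=\alpha_{gx}^2$. Your additional pair checks go toward sufficiency, which the paper leaves implicit.

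The check you left open for $a=x-gx$ imposes nothing and does not force $\alpha_{gx}=0$. Indeed, $(x-gx)R(gx)=(\alpha_{gx}+\beta_{gx})(x-gx)$ and $R(gx)(x-gx)=(\alpha_{gx}-\beta_{gx})(x-gx)$, while $(x-gx)gx=gx(x-gx)=0$, so every term lies in $\ker R$.
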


\begin{proof}
Applying \eqref{idn:RB} to the pairs $(1,gx)$ and $(gx,gx)$, we get $R(gx) + \lambda gx \in \ker R$ and $\alpha_{gx}^2 = \beta_{gx}^2$, respectively. So, we have obtained the operators (1a) and (1b). Note that under the action of the antiautomorphism $\psi(g) = -g, \ \psi(x) = x$ the operators (1a) and (1b) are conjugated. This concludes the proof. 
\end{proof}

\section{Dimension of kernel is 2}

\begin{theorem}
Let $\ker R = \langle 1, g \rangle$. Then we have the following operators:\\*[\smallskipamount]
\begin{enumerate*}[label=\normalfont(\roman*),itemjoin={}]
\item[\normalfont (1a)]
$
\begin{array}{lcl}
R(x)&=& \alpha_x 1 + \alpha_x g - \lambda x, \\
R(gx)&=& \alpha_x 1 + \alpha_x g - \lambda gx.
\end{array}
$ \quad \quad

\item[\normalfont (1b)]
$
\begin{array}{lcl}
R(x)&=& \alpha_x 1 - \alpha_x g - \lambda x, \\
R(gx)&=& -\alpha_x 1 + \alpha_x g - \lambda gx.
\end{array}
$\\*[\medskipamount]

\item[\normalfont (1c)]
$
\begin{array}{lcl}
R(x)&=& \alpha_x 1 - \alpha_x g - \lambda x, \\
R(gx)&=& \alpha_x 1 - \alpha_x g - \lambda gx.
\end{array}
$ \quad \quad

\item[\normalfont (1d)]
$
\begin{array}{lcl}
R(x)&=& \alpha_x 1 + \alpha_x g - \lambda x, \\
R(gx)&=& -\alpha_x 1 - \alpha_x g - \lambda gx.
\end{array}
$
\end{enumerate*}
\end{theorem}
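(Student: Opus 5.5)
The plan is to pin down $R(x)$ and $R(gx)$ completely from the Rota--Baxter identity \eqref{idn:RB}, using the fact that $R(1)=R(g)=0$. I will apply \eqref{idn:RB} in turn to the pairs $(1,x)$, $(1,gx)$, then $(x,x)$, $(gx,gx)$, and finally $(x,gx)$, $(gx,x)$. This is the same strategy as in the proofs of the previous theorems.

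\emph{Step 1: the shape of $R(x)$ and $R(gx)$.} Applying \eqref{idn:RB} to $(1,x)$ gives $0=R(1)R(x)=R\bigl(R(x)+\lambda x\bigr)$, so $R(x)+\lambda x\in\ker R=\langle 1,g\rangle$. The pair $(1,gx)$ similarly gives $R(gx)+\lambda gx\in\langle 1,g\rangle$. Hence
\[
R(x)=\alpha_x 1+\beta_x g-\lambda x,\qquad R(gx)=\alpha_{gx}1+\beta_{gx}g-\lambda gx .
\]
The pairs $(g,x)$ and $(g,gx)$ add nothing: for instance $gR(x)+\lambda gx=\beta_x1+\alpha_x g$ already lies in $\ker R$.

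\emph{Step 2: the squares.} For the pair $(x,x)$, since $x^2=0$ and $xg=-gx$, we get $R(x)x+xR(x)=2\alpha_x x$. So the right-hand side of \eqref{idn:RB} is $2\alpha_x R(x)$. Expanding the left-hand side gives
\[
R(x)^2=(\alpha_x^2+\beta_x^2)1+2\alpha_x\beta_x g-2\lambda\alpha_x x.
\]
Comparing coefficients yields $\beta_x^2=\alpha_x^2$. The same computation for $(gx,gx)$, using $(gx)^2=0$, yields $\beta_{gx}^2=\alpha_{gx}^2$.

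\emph{Step 3: the mixed pairs (the main step).} I will write out \eqref{idn:RB} for $(x,gx)$ and $(gx,x)$, using $x\cdot gx=0=gx\cdot x$. Each side is a combination of $1,g,x,gx$. Since $R$ kills $1$ and $g$, the right-hand side reduces to $R$ applied to the $x,gx$-part of $R(x)gx+xR(gx)$, which is computable from Step 1. Comparing coefficients should force $|\alpha_{gx}|=|\alpha_x|$ with a sign pattern. With $\beta_x=\varepsilon\alpha_x$ and $\beta_{gx}=\eta\alpha_{gx}$, I expect to obtain $\alpha_{gx}=\pm\alpha_x$. The signs should be tied to $\varepsilon,\eta$ so that exactly the four combinations (1a)--(1d) survive:
\[
(\beta_x,\alpha_{gx},\beta_{gx})=(\alpha_x,\alpha_x,\alpha_x),\ (-\alpha_x,-\alpha_x,\alpha_x),\ (-\alpha_x,\alpha_x,-\alpha_x),\ (\alpha_x,-\alpha_x,-\alpha_x).
\]
This bookkeeping, in particular eliminating the degenerate cases such as $\alpha_x=0\neq\alpha_{gx}$, is where I expect the real work. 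I would handle it by splitting on whether $\alpha_x=0$, using the $x$- and $gx$-coefficients of the two mixed identities.

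\emph{Step 4: sufficiency.} Finally, I would check that each of (1a)--(1d) satisfies \eqref{idn:RB} on all basis pairs, which is a routine substitution. Although not required by the statement, I could also note which of the four operators are conjugate under the automorphism $g\mapsto -g$, $x\mapsto x$ or the antiautomorphism $\psi$, as in the earlier theorems.
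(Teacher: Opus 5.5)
Your route is the paper's route: the same pairs $(1,x)$, $(1,gx)$, $(x,x)$, $(gx,gx)$, $(x,gx)$, $(gx,x)$ in the same order. The four sign patterns you predict are exactly (1a)--(1d) in that order.

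The gap is that Step~3, the only non-routine step, is predicted rather than derived. Moreover, the one concrete instruction you give for it would not work, because the $x$- and $gx$-coefficients of the two mixed identities hold identically. Write $a=\alpha_x$, $b=\beta_x$, $c=\alpha_{gx}$, $d=\beta_{gx}$. Then $R(x)\,gx+x\,R(gx)=(b+c)x+(a-d)gx$. So for $(x,gx)$ both sides of \eqref{idn:RB} have $x$-coefficient $-\lambda(b+c)$ and $gx$-coefficient $-\lambda(a-d)$. Likewise for $(gx,x)$, where $R(gx)\,x+gx\,R(x)=(c-b)x+(a+d)gx$. All the information sits in the $1$- and $g$-coefficients, which give
\[
(a-d)(b+c)=0,\qquad (a+d)(c-b)=0,\qquad b^2=d^2.
\]
This is exactly the paper's system.

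With these equations, no splitting on $\alpha_x=0$ is needed. Step~2 together with $b^2=d^2$ gives $a^2=b^2=d^2=c^2$. Hence $a=0$ forces $b=c=d=0$, which disposes of the degenerate case $\alpha_x=0\neq\alpha_{gx}$ at once.

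For $a\neq 0$, write $b=\varepsilon a$, $c=\sigma a$, $d=\tau a$ with $\varepsilon,\sigma,\tau\in\{\pm1\}$. The two product equations become $(1-\tau)(\varepsilon+\sigma)=0$ and $(1+\tau)(\sigma-\varepsilon)=0$. Since $\operatorname{char}F\neq 2$, $\tau=1$ forces $\sigma=\varepsilon$ and $\tau=-1$ forces $\sigma=-\varepsilon$. This yields precisely your four patterns.

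Your Step~4 is then immediate. Given Step~1, every pair involving $1$ or $g$ reduces to $0=R(w)$ with $w\in\langle 1,g\rangle$, so it imposes nothing. Hence the equations of Step~2 and the display above are the complete set of conditions.
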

\begin{proof}
Let $R(x) = \alpha_x 1 + \beta_x g + \gamma_x x + \delta_x gx$, $R(gx) = \alpha_{gx} 1 + \beta_{gx} g + \gamma_{gx} x + \delta_{gx} gx$. We claim that $R(x) + \lambda x = \alpha_x 1 + \beta_x g$ and $R(gx) + \lambda gx = \alpha_{gx} 1 + \beta_{gx} g$. Indeed, this follows from \eqref{idn:RB}, applied to the pairs $(1, x)$ and $(1, gx)$: 
\begin{align*}
R(R(x) + \lambda x) = 0, \quad R(R(gx) + \lambda gx) = 0.
\end{align*}
Applying \eqref{idn:RB} to the pairs $(x,x)$ and $(gx,gx)$, we get $\alpha_x^2 = \beta_x^2$ and $\alpha_{gx}^2 = \beta_{gx}^2$, respectively. Further, from \eqref{idn:RB}, applied to the pairs $(x, gx)$ and $(gx, x)$, we have
$$
\begin{cases}
	(\alpha_x - \beta_{gx})(\alpha_{gx} + \beta_x) = 0, \\
	(\alpha_x + \beta_{gx})(\alpha_{gx} - \beta_x) = 0, \\
	\beta_x^2 - \beta_{gx}^2 = 0.
\end{cases}
$$
So, from this system we get four operators. Note that the operators (1a), (1b) and (1c), (1d) lie in the same orbit under the action of the automorphism $\varphi(g) = -g$, $\varphi(x) = x$. Also, the operators (1a) and (1c) are conjugate under the action of the antiautomorphism $\psi(g) = g, \ \psi(x) = x$. 
\end{proof}

\begin{theorem} \label{thr:1,x}
Let $\ker R = \langle 1, x \rangle$. Then we have the following operators:\\*[\smallskipamount]
\begin{enumerate*}[label=\normalfont(\roman*),itemjoin={}]
\item[\normalfont (1a)]
$
\begin{array}{lcl}
	R(g) &=& -\lambda 1 - \lambda g + \gamma_g x, \\
	R(gx) &=& \lambda x - \lambda gx.
\end{array}
$ \quad \quad

\item[\normalfont (1b)]
$
\begin{array}{lcl}
	R(g) &=& \lambda 1 - \lambda g + \gamma_g x, \\
	R(gx) &=& -\lambda x - \lambda gx.
\end{array}
$\\*[\medskipamount]

\item[\normalfont (1c)] 
$
\begin{array}{lcl}
	R(g) &=& -\lambda 1 - \lambda g + \gamma_g x, \\
	R(gx) &=& -\lambda x - \lambda gx.
\end{array}
$ \quad \quad

\item[\normalfont (1d)]
$
\begin{array}{lcl}
	R(g) &=& \lambda 1 - \lambda g + \gamma_g x, \\
	R(gx) &=& \lambda x - \lambda gx.
\end{array}
$
\end{enumerate*}
\end{theorem}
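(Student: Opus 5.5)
Since $\ker R = \langle 1, x\rangle$, the operator is determined by $R(g)$ and $R(gx)$. I will write
$$
R(g) = \alpha_g 1 + \beta_g g + \gamma_g x + \delta_g gx, \qquad R(gx) = \alpha_{gx} 1 + \beta_{gx} g + \gamma_{gx} x + \delta_{gx} gx,
$$
and impose \eqref{idn:RB} on the pairs of basis elements, as in the preceding theorems.

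\textbf{Step 1: the pairs $(1,g)$ and $(1,gx)$.} Since $R(1)=0$, these pairs give $R(R(g)+\lambda g)=0$ and $R(R(gx)+\lambda gx)=0$. Hence $R(g)+\lambda g$ and $R(gx)+\lambda gx$ lie in $\ker R=\langle 1,x\rangle$. This forces
$$
\beta_g=-\lambda,\quad \delta_g=0,\quad \beta_{gx}=0,\quad \delta_{gx}=-\lambda .
$$
So $R(g)=\alpha_g 1-\lambda g+\gamma_g x$ and $R(gx)=\alpha_{gx}1+\gamma_{gx}x-\lambda gx$.

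\textbf{Step 2: the pairs $(x,\cdot)$ and $(\cdot,x)$.} Because $R(x)=0$, the pairs $(x,g)$, $(g,x)$, $(x,gx)$ and $(gx,x)$ give
$$
R(xR(g))=0,\quad R(R(g)x+\lambda gx)=0,\quad R(xR(gx))=0,\quad R(R(gx)x)=0 .
$$
I compute the products using $xg=-gx$. For example, $R(g)x=\alpha_g x-\lambda gx$, so $R(g)x+\lambda gx=\alpha_g x\in\ker R$, which holds automatically. Likewise $xR(g)=\alpha_g x+\lambda gx$, so $R(xR(g))=\lambda R(gx)$. Since $\lambda\neq 0$ this would force $R(gx)=0$, contradicting $\dim\ker R=2$.

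This is suspicious, and I expect reconciling it to be the main obstacle. I will recheck the sign: $x\cdot g=-gx$, so $x(-\lambda g)=\lambda gx$, and $R(xR(g))=R(\alpha_g x+\lambda gx)=\lambda R(gx)$. The identity for $(x,g)$ is
$$
R(x)R(g)=R\bigl(R(x)g+xR(g)+\lambda xg\bigr).
$$
I had dropped the term $\lambda xg=-\lambda gx$. With it, the argument is $\alpha_g x+\lambda gx-\lambda gx=\alpha_g x$, which lies in $\ker R$, so the pair $(x,g)$ gives no constraint. I will handle every pair carefully in this way, keeping the weight term each time.

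\textbf{Step 3: the remaining pairs.} These are $(g,g)$, $(g,gx)$, $(gx,g)$ and $(gx,gx)$. By analogy with the earlier theorems, I expect:
\begin{itemize}
\item $(g,g)$ to give $\alpha_g^2=\lambda^2$, i.e.\ $\alpha_g=\pm\lambda$;
\item $(gx,gx)$ to give $\alpha_{gx}=0$;
\item the mixed pairs to give $\gamma_{gx}=\pm\lambda$, with its sign tied to the sign of $\alpha_g$.
\end{itemize}
This should leave $\gamma_g$ free and produce exactly the four sign combinations (1a)--(1d).

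I will also verify sufficiency by plugging each of the four operators back into \eqref{idn:RB} on all basis pairs. Finally, following the pattern of the previous proofs, I will remark which of the four operators are conjugate to one another via $\varphi(g)=-g,\ \varphi(x)=x$ or the antiautomorphism $\psi$, making sure that these maps preserve $\langle 1,x\rangle$.
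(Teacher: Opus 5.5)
Your Steps 1 and 2 are correct and agree with the paper: once the weight term is kept, the pairs involving $x$ impose nothing. The gap is Step 3, which is where the whole content of the theorem lies. There you do not compute $(g,g)$, $(gx,gx)$, $(g,gx)$ and $(gx,g)$; you only predict their outcome ``by analogy,'' and one prediction is wrong.

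The paper obtains three constraints:
\begin{itemize}
\item $\alpha_g^2=\lambda^2$ from $(g,g)$: the $1$-coefficient of $R(g)^2$ is $\alpha_g^2+\lambda^2$, while the right-hand side is $R(2\alpha_g g-\lambda 1)=2\alpha_g R(g)$, with $1$-coefficient $2\alpha_g^2$.
\item $\alpha_{gx}=0$ from $(gx,gx)$: the same comparison gives $\alpha_{gx}^2=2\alpha_{gx}^2$.
\item Only $\gamma_{gx}^2=\lambda^2$ from $(g,gx)$.
\end{itemize}

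For the last point, take $R(g)=\alpha_g 1-\lambda g+\gamma_g x$ and $R(gx)=\gamma_{gx}x-\lambda gx$. Then
\[
R(g)R(gx)=(\alpha_g\gamma_{gx}+\lambda^2)\,x-\lambda(\alpha_g+\gamma_{gx})\,gx,
\]
\[
R\bigl(R(g)gx+gR(gx)+\lambda x\bigr)=(\alpha_g+\gamma_{gx})R(gx)=(\alpha_g\gamma_{gx}+\gamma_{gx}^2)\,x-\lambda(\alpha_g+\gamma_{gx})\,gx.
\]
The $gx$-components agree identically, and the $x$-components give $\gamma_{gx}^2=\lambda^2$. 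The pair $(gx,g)$ yields the same equation.

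So the sign of $\gamma_{gx}$ is \emph{not} tied to the sign of $\alpha_g$. This independence is exactly why four operators appear. For example, (1a) and (1c) both have $\alpha_g=-\lambda$, but $\gamma_{gx}=\lambda$ and $\gamma_{gx}=-\lambda$ respectively. As written, your Step 3 contradicts itself: a coupled sign would leave only two operators, not the ``four sign combinations'' you announce at the end.

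To complete the proof you must actually carry out these four computations. You should also note that the resulting $R(g)$ and $R(gx)$ are linearly independent, so the kernel is exactly $\langle 1,x\rangle$.
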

\begin{proof}
Let $R(g) = \alpha_g 1 + \beta_g g + \gamma_g x + \delta_g gx$ and $R(gx) = \alpha_{gx} 1 + \beta_{gx} g + \gamma_{gx} x + \delta_{gx} gx$. First, obviously, $R(g) = \alpha_g 1 - \lambda g + \gamma_g x$ and $R(gx) = \alpha_{gx} 1 + \gamma_{gx} x - \lambda gx$. Secondly, applying \eqref{idn:RB} to the pairs $(g,g)$ and $(gx,gx)$, we have $\alpha_g^2 = \lambda^2$ and $\alpha_{gx} = 0$, respectively. Finally, the Rota--Baxter identity applied to the pair $(g,gx)$ gives us $\gamma_{gx}^2 = \lambda^2$. Combining all ratios, we obtain operators (1a), (1b), (1c), (1d).  

Finally, note that the operators (1a), (1b) and (1c), (1d) lie in the same orbit under the action of the automorphism \(\psi(g) = -g\), \(\psi(x) = -x\). Also, the operators (1a) and (1c) conjugate under the action of antiautomorphism $f(g) = g, \ f(x) = x$. Moreover, one may assume \(\gamma_g = 0\), since the operator in case (1a) with arbitrary \(\gamma_g\) is conjugate to the operator in case (1a) with \(\gamma_g = 0\) via an automorphism:
\begin{align*}
&\varphi(g) = g - \dfrac{\gamma_g}{\lambda} x, \ \varphi(x) = x. \qedhere
\end{align*}
\end{proof}

\begin{theorem} \label{thr:1,x-gx}
Let \(\ker R = \langle 1, x - gx \rangle\). Then we have the following operators:\\*[\smallskipamount]
\begin{enumerate*}[label=\normalfont(\roman*),itemjoin={}]
\item[\normalfont (1a)] 
$
\begin{array}{lcl}
R(g) &=& -\lambda 1 - \lambda g + \gamma_g x - \gamma_g gx, \\
R(gx) &=& -\frac{\lambda}{2} x - \frac{\lambda}{2} gx.
\end{array}
$ \quad \quad

\item[\normalfont (1b)]
$
\begin{array}{lcl}
R(g) &=& \lambda 1 - \lambda g + \gamma_g x - \gamma_g gx, \\
R(gx) &=& -\frac{\lambda}{2} x - \frac{\lambda}{2} gx.
\end{array}
$
\end{enumerate*}
\end{theorem}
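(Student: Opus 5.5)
The plan is to follow the template of Theorem~\ref{thr:1,x}: first pin down $R(g)$ and $R(gx)$ using the kernel, then impose \eqref{idn:RB} on the remaining pairs of basis elements. Since $R(1)=0$, applying \eqref{idn:RB} to the pairs $(1,g)$ and $(1,gx)$ gives
\[
R\bigl(R(g)+\lambda g\bigr)=0,\qquad R\bigl(R(gx)+\lambda gx\bigr)=0 .
\]
Hence $R(g)+\lambda g$ and $R(gx)+\lambda gx$ lie in $\ker R=\langle 1,x-gx\rangle$. So we can write
\[
R(g)=\alpha_g 1-\lambda g+\gamma_g x-\gamma_g gx,\qquad R(gx)=\alpha_{gx}1+\gamma_{gx}x-(\lambda+\gamma_{gx})gx .
\]
This leaves four unknowns, and everything that follows is a polynomial system in them.

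Next I would use the pair $(gx,gx)$. All products among $x$ and $gx$ vanish, so the nilpotent part $n=\gamma_{gx}x-(\lambda+\gamma_{gx})gx$ satisfies $n^2=0$. The left-hand side is therefore $\alpha_{gx}^2 1+2\alpha_{gx}n$. On the right-hand side, $gx\cdot R(gx)=\alpha_{gx}gx$, so it equals $R(2\alpha_{gx}gx)=2\alpha_{gx}^2 1+2\alpha_{gx}n$. Comparing the two forces $\alpha_{gx}=0$.

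The pair $(g,g)$, computed as in Theorem~\ref{thr:1,x}, should give $\alpha_g^2=\lambda^2$, i.e. $\alpha_g=\pm\lambda$. This produces the split into cases (1a) and (1b).

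The remaining work, and the step I expect to be the main obstacle, is the mixed pairs $(g,gx)$ and $(gx,g)$. I expect these to force $\gamma_{gx}=-\lambda/2$, giving $R(gx)=-\frac{\lambda}{2}x-\frac{\lambda}{2}gx$, while leaving $\gamma_g$ free. Here one has to expand $R(g)R(gx)$ carefully using $gx=-xg$. One also has to check that the $1$- and $g$-components of the right-hand side vanish consistently. Otherwise one gets a spurious constraint tying $\gamma_g$ to $\alpha_g$.

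Finally, I would verify the pairs involving the kernel element $x-gx$ (in both orders) with $g$ and $gx$. For these, \eqref{idn:RB} reduces to
\[
R\bigl((x-gx)R(b)+\lambda(x-gx)b\bigr)=0 \quad\text{and}\quad R\bigl(R(b)(x-gx)+\lambda b(x-gx)\bigr)=0 .
\]
These amount to checking that certain elements of $\langle x,gx\rangle$ lie in $\ker R$ or are killed by $R$. I expect them to be automatically satisfied once $\gamma_{gx}=-\lambda/2$. As in the earlier theorems, one can also note that $\dim\ker R=2$ holds exactly, since $R(g)$ and $R(gx)$ are linearly independent.
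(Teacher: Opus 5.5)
Your proposal is correct and follows the paper's proof essentially step for step. The pairs $(1,g)$ and $(1,gx)$ fix the shape of $R(g)$ and $R(gx)$, then $(gx,gx)$ gives $\alpha_{gx}=0$, $(g,g)$ gives $\alpha_g^2=\lambda^2$, and $(g,gx)$ gives $2\lambda\gamma_{gx}=-\lambda^2$ in both the $x$- and $gx$-coordinates, with $\gamma_g$ left free. Your extra sufficiency check is a welcome addition that the paper omits; note that the pairs involving $x-gx$ hold regardless of $\gamma_{gx}$, because $R(b)+\lambda b\in\ker R$ and $\ker R=\langle 1,x-gx\rangle$ is a subalgebra.
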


\begin{proof}
Since $1 \in \ker R$, let $R(g) = \alpha_g 1 - \lambda g + \gamma_g x - \gamma_g gx$ and $R(gx) = \alpha_{gx} 1 + \gamma_{gx} x - (\gamma_{gx} + \lambda) gx$. Obviously, $\alpha_g^2 = \lambda^2, \ \alpha_{gx} = 0$, it follows from the Rota--Baxter identity applied to the pairs $(g,g), \ (gx,gx)$. Finally, the Rota--Baxter identity applied to the pair $(g,gx)$ gives us $\gamma_{gx} = -\lambda / 2$. So, we have obtained the operators (1a) and (1b). Under the action of the antiautomorphism $\psi(g) = -g, \ \psi(x) = -x$, the operators (1a) and (1b) are conjugated. 

We note that we may assume \(\gamma_g = 0\), since the operator (1a) with arbitrary \(\gamma_g\) is conjugate to (1a) with \(\gamma_g = 0\) under the action of the automorphism:
\begin{align*}
&\varphi(g) = g - \dfrac{2\gamma_g}{\lambda} x, \ \varphi(x) = x. 
\qedhere
\end{align*}
\end{proof}

\begin{theorem} \label{thr:x,gx}
Let $\ker R = \langle x, gx \rangle$. Then we have the following operators:\\*[\smallskipamount]
\begin{enumerate*}[label=\normalfont(\arabic*),itemjoin={}]
\item 
$
\begin{array}{lcl}
	R(1) &=& -\lambda 1, \\
	R(g) &=& -\lambda g + \gamma_{gx} x + \delta_{gx} gx.
\end{array}
$ \quad \quad

\item[\normalfont (2a)]
$
\begin{array}{lcl}
	R(1) &=& -\frac{3\lambda}{2} 1 - \frac{\lambda}{2} g + \gamma_{gx} x + \delta_{gx} gx, \\
	R(g) &=& \frac{\lambda}{2} 1 - \frac{\lambda}{2} g + \gamma_{gx} x + \delta_{gx} gx.
\end{array}
$\\*[\medskipamount]

\item[\normalfont (2b)]
$
\begin{array}{lcl}
	R(1) &=& -\frac{3\lambda}{2} 1 + \frac{\lambda}{2} g - \gamma_{gx} x - \delta_{gx} gx, \\
	R(g) &=& -\frac{\lambda}{2} 1 - \frac{\lambda}{2} g + \gamma_{gx} x + \delta_{gx} gx.
\end{array}
$
\end{enumerate*}
\end{theorem}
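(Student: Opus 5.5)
Since $x, gx \in \ker R$, the operator is determined by $R(1)$ and $R(g)$. Write $R(1) = \alpha_1 1 + \beta_1 g + \gamma_1 x + \delta_1 gx$ and $R(g) = \alpha_g 1 + \beta_g g + \gamma_g x + \delta_g gx$. The plan is to apply \eqref{idn:RB} to the pairs involving $x$ or $gx$ first, which gives linear constraints, and only then turn to the quadratic pairs $(1,1)$, $(1,g)$, $(g,1)$, $(g,g)$.

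\textbf{Step 1: pairs with one argument in the kernel.} For $a\in\{1,g\}$ and $b\in\{x,gx\}$, \eqref{idn:RB} reads
\[
0 = R(a)R(b) = R(R(a)b + \lambda ab),
\]
and symmetrically for $(b,a)$. Because $\ker R = \langle x, gx\rangle$ is an ideal, the $x,gx$-parts of $R(a)$ contribute nothing after multiplying by $b$. Hence $R(a)b + \lambda ab \in \langle x, gx\rangle$ always, and these pairs give no information. The pairs $(x,x)$, $(x,gx)$, etc.\ are trivially satisfied.

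\textbf{Step 2: the quadratic pairs.} Let $\pi\colon H_4 \to H_4/\langle x,gx\rangle \cong F[g]/(g^2-1)$ be the quotient map. Since $R$ kills the ideal $\langle x, gx\rangle$, the induced operator $\bar R$ on this two-dimensional commutative semisimple algebra $F e_+ \oplus F e_-$, with $e_\pm = (1\pm g)/2$, is again a Rota--Baxter operator of weight $\lambda$, and it has trivial kernel. I would first classify such $\bar R$. On $F\times F$ the operators are known: up to the swap, the injective ones are $-\lambda\,\mathrm{id}$ and the ones with $\bar R(e_+) = -\lambda e_+ + c\,e_-$ type, which are triangular. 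Equivalently, I would just solve the four $(1,1),(1,g),(g,1),(g,g)$ equations for $\alpha,\beta$ directly. The expected outcome is
\[
(\alpha_1,\beta_1,\alpha_g,\beta_g) \in \{(-\lambda,0,0,-\lambda),\ (-\tfrac{3\lambda}{2},\mp\tfrac{\lambda}{2},\pm\tfrac{\lambda}{2},-\tfrac{\lambda}{2})\}.
\]

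\textbf{Step 3: the $x,gx$-components.} The $x,gx$-components of the same four equations impose linear conditions on $\gamma_1,\delta_1,\gamma_g,\delta_g$, with coefficients depending on the $\alpha,\beta$ already found. I would verify that they force:
\begin{itemize}
\item $\gamma_1 = \delta_1 = 0$ in case (1);
\item $(\gamma_1,\delta_1) = \pm(\gamma_g,\delta_g)$ in cases (2a)/(2b), matching the statement, where the paper relabels the free parameters as $\gamma_{gx},\delta_{gx}$.
\end{itemize}
The computation uses $gx=-xg$, so left and right multiplications by $g$ differ on $\langle x,gx\rangle$. Both $(1,g)$ and $(g,1)$ must be used, and I expect this to be where the constraints pin down the relation between the two pairs of parameters. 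Finally I would note, as in earlier theorems, that the automorphism $\varphi(g)=-g,\ \varphi(x)=x$ interchanges (2a) and (2b).

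\textbf{Main obstacle.} The main obstacle is Step 3: keeping the noncommutative sign bookkeeping straight so that exactly the stated dependencies, and no spurious extra solutions, come out.
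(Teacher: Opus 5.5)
Your plan is correct and every outcome you predict is right. It reorganizes the paper's computation rather than repeating it.

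\textbf{How the two routes differ.} The paper expands all coordinates of $(1,1)$, $(g,g)$, $(1,g)$, $(g,1)$ together and splits on whether $2\beta_g+\lambda=0$. It discards degenerate solutions by showing case by case that $R(1)$ and $R(g)$ become linearly dependent in $H_4$. That step uses the $x,gx$-coordinates along the way, e.g.\ $\gamma_1(2\beta_g+\lambda)=0$ to get $R(1)=-\lambda 1$.

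You split $H_4=(F1+Fg)\oplus I$, where $I=\langle x,gx\rangle$ is an ideal with $I^2=0$. The $1,g$-coordinates of the four equations are then exactly (RB) for $\bar R$ on $Fe_+\oplus Fe_-$. The $I$-coordinates are linear in $v_1=\gamma_1x+\delta_1gx$ and $v_g=\gamma_gx+\delta_ggx$.

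\textbf{What this buys.} The first stage becomes a clean classification. Write $\bar R(e_+)=ae_++ce_-$ and $\bar R(e_-)=be_++de_-$. Then (RB) gives $a(a+\lambda)=d(d+\lambda)=0$, $c^2=(2a+\lambda)c$, $b^2=(2d+\lambda)b$ and $bc=0$. Injectivity forces $a=d=-\lambda$ and $(b,c)\in\{(0,0),(-\lambda,0),(0,-\lambda)\}$, which are exactly (1), (2a), (2b). So the off-diagonal entry you call $c$ is not a free parameter; it must equal $-\lambda$.

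The second stage is easier than you expect, since $vg=-gv$ for $v\in I$.
\begin{itemize}
\item The $I$-part of $(1,1)$ is $\lambda v_1+2\beta_1v_g=0$.
\item The difference of the $I$-parts of $(1,g)$ and $(g,1)$ gives $\beta_1v_g=\beta_gv_1$. So your expectation about these two pairs is right: with $\beta_g\neq0$ in all cases, $v_1\in\{0,v_g,-v_g\}$.
\item The remaining $I$-parts then hold identically, which leaves $v_g$ free.
\end{itemize}

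\textbf{The one unjustified step.} You assert without argument that $\bar R$ has trivial kernel. This does not follow directly from $\ker R=I$: a priori $R$ could send some nonzero $a\in F1+Fg$ to a nonzero element of $I$. The fix is short. Suppose $0\neq a=s1+tg$ and $R(a)\in I$. Apply (RB) to $(a,a)$, using that $I$ is an ideal with $I^2=0$ and $R(I)=0$:
\begin{itemize}
\item the left side is $R(a)^2=0$;
\item on the right, $R(a)a$ and $aR(a)$ lie in $I$ and are killed by $R$, so the right side is $\lambda R(a^2)$.
\end{itemize}
Hence $R(a^2)=0$. But $a^2=(s^2+t^2)1+2st\,g$ is a nonzero element of $F1+Fg$, so it is not in $\ker R$, a contradiction.

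The paper avoids this lemma through its case-by-case dependence checks; your route needs it once, uniformly, and then is complete.
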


\begin{proof}
Note that only for the pairs $(1,1)$, $(1,g)$, $(g,1)$, $(g,g)$ is the Rota--Baxter identity nontrivial. Let $R(1) = \alpha_1 1 + \beta_1 g + \gamma_1 x + \delta_1 gx$, $R(g) = \alpha_g 1 + \beta_g g + \gamma_g x + \delta_g gx$. Thus, applying \eqref{idn:RB} to the pairs $(1,1)$ and $(g,g)$, we have
\begin{align*}
\left\{
\begin{array}{rll}
	\beta_1^2 - \alpha_1 (\alpha_1 + \lambda ) &=& 2 \beta_1 \alpha_g, \\
	\beta_1 (2\beta_g + \lambda) &=& 0, \\
	\lambda \gamma_1 + 2 \beta_1 \gamma_g &=& 0, \\
	\lambda \delta_1 + 2 \beta_1 \delta_g &=& 0.
\end{array}
\right.
\quad
\left\{
\begin{array}{rll}
	\alpha_g^2 + \alpha_1 (2\beta_g + \lambda) &=& \beta_g^2, \\
	\beta_1 (2\beta_g + \lambda) &=& 0, \\
	\gamma_1 (2\beta_g + \lambda) &=& 0, \\
	\delta_1 (2\beta_g + \lambda) &=& 0.
\end{array}
\right.
\end{align*}
Next, combining the Rota--Baxter identity for the pairs $(1,g)$ and $(g,1)$, we have
\begin{align*}
\left\{
\begin{array}{rll}
\beta_1 (\beta_g - \alpha_1) &=& (\alpha_1 + \beta_g + \lambda) \alpha_g, \\
\beta_1^2 + \beta_g (\beta_g + \lambda) &=& 0, \\
\beta_1 \gamma_1 + (\beta_g + \lambda) \gamma_g &=& 0, \\
\beta_1 \delta_1 + (\beta_g + \lambda) \delta_g &=& 0.
\end{array}
\right.
\end{align*}
Assume that $2 \beta_g + \lambda \ne 0$. Then it follows that $R(1) = -\lambda 1$. From the last system of equations we get $\beta_g (\beta_g + \lambda) = 0$. If $\beta_g = 0$, then $\gamma_g = \delta_g = 0$. Thus $R(1)$ and $R(g)$ are linearly dependent. This contradicts the fact that $\ker R = \langle x, gx \rangle$. So, $\beta_g = -\lambda$. Also, from the last system of equations we obtain $\alpha_g = 0$. Therefore, we get operator (1). 

Now suppose that $2 \beta_g + \lambda = 0$. Hence, we obtain $\alpha_g^2 = \beta_g^2$ and $\beta_1^2 = \lambda^2 / 4$. From the first system we obtain the equation for $\alpha_1$: $\alpha_1^2 + \lambda \alpha_1 - \lambda^2 / 4 + 2 \beta_1 \alpha_g = 0$. If $\beta_1 = \alpha_g$, then $\alpha_1 = -\lambda / 2$; hence, $R(1)$ and $R(g)$ are linearly dependent. Contradiction. 

Let $\beta_1 = -\alpha_g$. We obtain $\alpha_1 = \lambda / 2$ or $\alpha_1 = -\lambda - \lambda / 2$. Similarly, if $\alpha_1 = \lambda / 2$, then $R(1)$ and $R(g)$ are linearly dependent. So, $\alpha_1 = -\lambda - \lambda / 2$, and we get operators (2a) and (2b). Operators (2a) and (2b) lie in the same orbit under the action of the automorphism:
\begin{align*}
&f(g) = -g + \dfrac{4\gamma_{gx}}{\lambda}x, \
f(x) = x.
\end{align*}
In fact, we may assume \(\gamma_{gx} = \delta_{gx} = 0\) for operator (1), since under the automorphism
$$
\varphi(g) = g - \dfrac{\gamma_{gx}}{\lambda} x - \dfrac{\delta_{gx}}{\lambda} gx, \ \varphi(x) = x,
$$
the operator (1) with arbitrary \(\gamma_{gx}\) and \(\delta_{gx}\) is conjugate to the operator (1) with \(\gamma_{gx} = \delta_{gx} = 0\). The same reasoning applies to operator (2a); it suffices to consider the automorphism:
\begin{align*}
&\psi(g) = g - \dfrac{2\gamma_{gx}}{\lambda} x - \dfrac{2\delta_{gx}}{\lambda} gx, \ \psi(x) = x. \qedhere
\end{align*}
\end{proof}

\begin{theorem} \label{thr:1-g,x-gx}
Let $\ker R = \langle 1 - g, x - gx \rangle$. Then we have the following operators:\\*[\smallskipamount]
\begin{enumerate*}[label=\normalfont(\arabic*),itemjoin={}]
\item 
$
\begin{array}{lcl}
	R(g) &=&  -\frac{\lambda}{2} 1 - \frac{\lambda}{2} g + \gamma_g x - \gamma_g gx, \\
	R(gx) &=& -\frac{\lambda}{2} x - \frac{\lambda}{2} gx.
\end{array}
$ \quad

\item 
$
\begin{array}{lcl}
	R(g) &=&  \frac{\lambda}{2} 1 - \frac{\lambda}{2} g + \gamma_g x - \gamma_g gx, \\
	R(gx) &=& -\frac{\lambda}{2} x - \frac{\lambda}{2} gx.
\end{array}
$\\*[\medskipamount]

\item 
$
\begin{array}{lcl}
	R(g) &=& -\lambda 1, \\
	R(gx) &=& -\beta_{gx} 1 + \beta_{gx} g + \gamma_{gx} x - (\lambda + \gamma_{gx}) gx.\\
\end{array}
$
\end{enumerate*}
\end{theorem}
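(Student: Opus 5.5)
Since $\ker R = \langle 1-g, x-gx\rangle$ and $\{1-g,\,x-gx,\,g,\,gx\}$ is a basis of $H_4$, the operator $R$ is determined by $R(g)$ and $R(gx)$. I will also use $R(1)=R(g)$ and $R(x)=R(gx)$. Write
\[
R(g)=\alpha_g 1+\beta_g g+\gamma_g x+\delta_g gx,\qquad R(gx)=\alpha_{gx}1+\beta_{gx}g+\gamma_{gx}x+\delta_{gx}gx.
\]

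\textbf{Step 1: the kernel elements.} First apply \eqref{idn:RB} to the pairs $(1-g,b)$ and $(x-gx,b)$. The left-hand side vanishes, which gives
\[
R\bigl((1-g)R(b)+\lambda(1-g)b\bigr)=0,\qquad R\bigl((x-gx)R(b)+\lambda(x-gx)b\bigr)=0.
\]
The same holds with the kernel element on the right. These say that certain products lie in $\ker R$, and they are linear conditions on the coefficients. For $b=g$ and $b=gx$ they should already force $\beta_g=-\lambda/2$ or $R(g)\in F1$, mirroring the dichotomy $2\beta_g+\lambda=0$ or not in the theorem for $\ker R=\langle 1-g,x,gx\rangle$. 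Membership in $\ker R$ is tested by writing an element in the basis $1-g,\,x-gx,\,g,\,gx$ and requiring the $g$- and $gx$-coordinates to vanish. These conditions should also pin down $R(gx)$ modulo the kernel.

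\textbf{Step 2: the quadratic identities.} Next apply \eqref{idn:RB} to $(g,g)$, $(g,gx)$, $(gx,g)$ and $(gx,gx)$. From $(g,g)$ I expect the same system as in the $\langle 1-g,x,gx\rangle$ case:
\[
\beta_g(2\beta_g+\lambda)=0,\qquad \gamma_g(2\beta_g+\lambda)=0,\qquad \delta_g(2\beta_g+\lambda)=0,
\]
together with the equation for $\alpha_g$.

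\emph{Case $2\beta_g+\lambda\neq 0$.} Then $\beta_g=\gamma_g=\delta_g=0$ and $\alpha_g=-\lambda$, so $R(g)=-\lambda 1$. Equation $(gx,gx)$ and the mixed pairs should then give $\alpha_{gx}=-\beta_{gx}$ and $\delta_{gx}=-(\lambda+\gamma_{gx})$, i.e.\ $R(gx)+\lambda gx$ has the shape of a kernel-type element. This is operator (3).

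\emph{Case $\beta_g=-\lambda/2$.} Then $\alpha_g^2=\lambda^2/4$. The mixed pairs $(g,gx)$ and $(gx,g)$ should force $\gamma_g=-\delta_g$ and $R(gx)=-\frac{\lambda}{2}x-\frac{\lambda}{2}gx$, giving operators (1) and (2) according to the sign of $\alpha_g$.

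\textbf{Main obstacle.} The hardest part is the mixed pairs in the second case. There, nonlinear terms such as $\gamma_g\beta_{gx}$ and $\alpha_g\alpha_{gx}$ appear, and I must show that $\alpha_{gx}=\beta_{gx}=0$ and $\gamma_{gx}=\delta_{gx}=-\lambda/2$. I would first use the Step 1 linear conditions to eliminate as many variables as possible, so that only a small system remains. I would also check that the excluded branches make $R(g)$ and $R(gx)$ linearly dependent, or enlarge the kernel beyond dimension 2, as was done in Theorem~\ref{thr:x,gx}. Finally, I would verify directly that each listed operator satisfies \eqref{idn:RB} on the basis pairs.
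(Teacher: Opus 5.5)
Your framework can work: since $H_4=\ker R\oplus\langle g,gx\rangle$, bilinearity makes \eqref{idn:RB} equivalent to your Step~1 conditions plus \eqref{idn:RB} on the four pairs from $\{g,gx\}$. The gap is that you predict the wrong output for each ingredient, and you never carry out the step you call the main obstacle.

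Step~1 cannot produce the dichotomy on $\beta_g$. One checks that $(1-g)H_4=\ker R$ and $(x-gx)H_4,\;H_4(x-gx)\subseteq\ker R$, while $w(1-g)\in\ker R$ iff the $x$- and $gx$-coefficients of $w$ sum to zero. Hence the kernel conditions are exactly
\[
\gamma_g+\delta_g=0,\qquad \gamma_{gx}+\delta_{gx}=-\lambda .
\]
They say nothing about $\alpha_g,\beta_g$, nor about the $g$-coordinate $\alpha_{gx}+\beta_{gx}$ of $R(gx)$ modulo the kernel.

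Conversely, the mixed pairs cannot force $\gamma_g=-\delta_g$ or pin down $R(gx)$, as you claim. Take $R(g)=\frac{\lambda}{2}(1-g)+\gamma x+\delta\, gx$ with arbitrary $\gamma,\delta$, and $R(gx)=\kappa(x-gx)$ with $\kappa\neq0$. This satisfies \eqref{idn:RB} on $(g,g)$, $(g,gx)$, $(gx,g)$, $(gx,gx)$ and has kernel exactly $\langle 1-g,x-gx\rangle$. Yet it is not on the list: it fails \eqref{idn:RB} on $(gx,1)$.

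So the missing idea is that the two displayed conditions are what resolve your main obstacle. The paper obtains them only late in the argument, from the pairs $(1,1)$ and $(x,1)$. Once they are imposed, all remaining equations are linear in the unknowns of $R(gx)$:

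If $\beta_g=-\lambda/2$ and $\alpha_g=\pm\lambda/2$, the $1$- and $g$-coefficients of \eqref{idn:RB} on $(g,gx)$ and $(gx,g)$ give $\alpha_{gx}=\beta_{gx}=0$. The $x$- and $gx$-coefficients of $(g,gx)$ then give $\gamma_{gx}=\delta_{gx}$, hence both equal $-\lambda/2$.

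If $2\beta_g+\lambda\neq0$, the $1$-coefficient of $(g,gx)$ gives $\alpha_{gx}+\beta_{gx}=0$. In this case you must also discard the root $\alpha_g=0$ of $\alpha_g(\alpha_g+\lambda)=0$, since it would put $g$ in the kernel.
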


\begin{proof}
Let $R(g) = \alpha_g 1 + \beta_g g + \gamma_g x + \delta_g gx$ and $R(gx) = \alpha_{gx} 1 + \beta_{gx} g + \gamma_{gx} x + \delta_{gx} gx$. The Rota--Baxter identity, applied to the pair $(g,gx)$, gives us the following system:
\begin{align} \label{eq:g,gx} \tag{$*$}
\left\{
\begin{array}{rll}
(\alpha_g - \beta_g)\beta_{gx} + (\alpha_g + \beta_g + \lambda + \gamma_{gx} + \delta_{gx})\alpha_{gx} &=& 0, \\
\beta_{gx} (2\beta_g + \lambda + \gamma_{gx} + \delta_{gx}) &=& 0, \\
\beta_g \delta_{gx} - \beta_{gx} (\delta_g + \gamma_g) - \gamma_{gx} (\beta_g + \lambda + \gamma_{gx} + \delta_{gx}) &=& 0, \\
\beta_g \gamma_{gx} - \beta_{gx} (\delta_g + \gamma_g) - \delta_{gx} (\beta_g + \lambda + \gamma_{gx} + \delta_{gx}) &=& 0.
\end{array}
\right.
\end{align}
Applying \eqref{idn:RB} to the pairs $(g,g)$ and $(gx,gx)$ gives us 
\begin{align} \label{eq:g,g} \tag{$**$}
\left\{
\begin{array}{cll}
	\beta_g^2 - \alpha_g^2 &=& (2\beta_g + \lambda)\alpha_g, \\
	0 &=& (2\beta_g + \lambda)\beta_g, \\
	0 &=& (2\beta_g + \lambda)\gamma_g, \\
	0 &=& (2\beta_g + \lambda)\delta_g,
\end{array}
\right.
\quad \text{and} \quad
\alpha_{gx}^2 = \beta_{gx}^2. 
\end{align}
Assume that $2\beta_g + \lambda \ne 0$. Then, from the last system, $\beta_g = \gamma_g = \delta_g = 0$ and, consequently, $\alpha_g = -\lambda$. Let $\alpha_{gx} = \sigma\beta_{gx}$, $\sigma^2 = 1$. We simplify the first and second equations from \eqref{eq:g,gx}; we have $\beta_{gx}(\gamma_{gx} + \delta_{gx} + \lambda) = 0$ and $\beta_{gx}(\gamma_{gx} + \delta_{gx} - \sigma\lambda) = 0$. Hence, if $\sigma = 1$, then $\beta_{gx} = 0$. One of the elements \(\gamma_{gx}\) or \(\delta_{gx}\) is nonzero, since otherwise we would obtain a contradiction with the fact that \(\dim \ker R = 2\). Therefore, from \eqref{eq:g,gx}, we have $\gamma_{gx} + \delta_{gx} = -\lambda$. We obtained an operator which is a special case of operator (3). If $\sigma = -1$, then we obtain operator (3). 

Finally, let $2\beta_g + \lambda = 0$. Then, obviously, $\alpha_g^2 = \beta_g^2$; this follows from \eqref{eq:g,g}. The Rota--Baxter identity applied to the pair $(gx,g)$ gives us 
$$
(\beta_g - \alpha_g)\beta_{gx} + (\beta_g - \alpha_g + \lambda + \gamma_{gx} + \delta_{gx})\alpha_{gx} = 0.
$$
Subtracting the last equality from the first equality in \eqref{eq:g,gx} yields the following: $2\alpha_g(\alpha_{gx} + \beta_{gx}) + \lambda \beta_{gx} = 0$. So, let $\alpha_{gx} = \sigma\beta_{gx}$ and $\alpha_g = \mu\beta_g$, where $\sigma^2 = \mu^2 = 1$. Hence, we obtain $(\sigma + \mu + 1)\beta_{gx} = 0$. Therefore, $\beta_{gx} = 0$. Furthermore, from \eqref{eq:g,gx} we obtain:
$$
(\gamma_{gx} + \delta_{gx})(\gamma_{gx} + \delta_{gx} + \lambda) = 0, \quad
(\gamma_{gx} + \delta_{gx})(\gamma_{gx} - \delta_{gx}) = 0. 
$$
The Rota--Baxter identity applied to the pair $(x,1)$ gives us 
$$
\beta_g \delta_{gx} + \gamma_{gx}(\lambda - \beta_g + \gamma_{gx} + \delta_{gx}) = 0. 
$$
So, if $\gamma_{gx} + \delta_{gx} = 0$, then $-2\lambda \gamma_{gx} = 0$. Consequently, $\gamma_{gx} = 0$ and $\delta_{gx} = 0$. This contradicts the assumption that the dimension of the kernel of the operator is 2. Thus $\gamma_{gx} + \delta_{gx} \neq 0$ and $\gamma_{gx} = \delta_{gx} = -\lambda / 2$. We have almost obtained operators (1) and (2). It remains to observe that applying \eqref{idn:RB} to the pair \((1,1)\) implies that $(\gamma_g + \delta_g)R(gx) = 0$. So, $\gamma_g + \delta_g = 0$.

We note that the operators (1) and (2) with arbitrary \(\gamma_g\) are conjugate, under the automorphism
$$
\varphi(g) = g - \dfrac{2\gamma_g}{\lambda} x + \dfrac{2\gamma_g}{\lambda} gx, \ \varphi(x) = x, 
$$
to the operators (1) and (2) with \(\gamma_g = 0\).
\end{proof}

%
%

\section{Dimension of kernel is 1}

If the dimension of $\ker R$ is 1, then the dimension of $\mathrm{im} \, R$ is 3.  

\begin{theorem}
Let $\mathrm{im} \, R = \langle 1-g,x,gx \rangle$. Then we have the following operators:\\*[\smallskipamount]
\begin{enumerate*}[label=\normalfont(\arabic*),itemjoin={}]
\item
$
\begin{array}{lcllcl}
	R(1) &=& -\frac{\lambda }{2}(1-g) + \gamma_g x + \delta_g gx, & R(x) &=& -\lambda x, \\
	R(g) &=& \frac{\lambda }{2}(1-g) + \gamma_g x + \delta_g gx, & R(gx) &=& -\lambda gx.
\end{array}
$\\*[\medskipamount]

\item 
$
\begin{array}{lcllcl}
	R(1) &=& \frac{\lambda }{2}(1-g) + \gamma_g x + \delta_g gx, & R(x) &=& -\lambda x, \\
	R(g) &=& \frac{\lambda }{2}(1-g) - \gamma_g x - \delta_g gx, & R(gx) &=& -\lambda gx.
\end{array}
$\\*[\medskipamount]

\item 
$
\begin{array}{lcllcl}
	R(1) &=& 0, & R(x) &=& -\lambda x, \\
	R(g) &=& \lambda (1-g), & R(gx) &=& -\lambda gx.
\end{array}
$
\end{enumerate*} 
\end{theorem}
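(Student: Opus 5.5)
Since $\ker R$ is one-dimensional, I will use the standard fact that $\mathrm{im}\,R$ is a subalgebra (it is closed under products by \eqref{idn:RB}). So $\mathrm{im}\,R = \langle 1-g, x, gx\rangle$ is a fixed $3$-dimensional subalgebra from Lemma 1. The idea is to first pin down $R$ on the ideal-like part $x, gx$, then on $1, g$. Write $R(a) = \alpha_a(1-g) + \gamma_a x + \delta_a gx$ for $a \in \{1,g,x,gx\}$, so every value has zero $1$-component equal to minus its $g$-component.

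First I will apply \eqref{idn:RB} to the pairs $(x,x)$, $(x,gx)$, $(gx,x)$ and $(gx,gx)$. The products $R(x)R(x)$ and similar only involve $(1-g)^2 = 2(1-g)$, $(1-g)x$, and so on. Comparing with $R(R(x)x + xR(x))$, which is $R$ applied to elements of $\langle x, gx\rangle$ (note $(1-g)x = x - gx$), I expect to force $\alpha_x = \alpha_{gx} = 0$. Then the restriction of $R$ to $\langle x, gx\rangle$ is a Rota--Baxter operator on a null algebra, and $R(x)R(x)=0$ holds automatically. Surjectivity onto the image then requires the span of $R(1)$ and $R(g)$ to contribute $1-g$.

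Next I will use the pairs $(1,x)$ and $(x,1)$: $R(1)R(x) = R(R(1)x + R(x) + \lambda x)$. Here $R(1)x$ contains $\alpha_1(x - gx)$, which links the $2\times2$ block $M$ of $R$ on $\langle x,gx\rangle$ to $\alpha_1$. I plan to combine this with the analogous identities for $g$, namely $(g,x)$, $(x,g)$, $(g,gx)$, $(gx,g)$, and show that $M = -\lambda\,\mathrm{id}$. The alternative $M=0$ should lead to $\dim\ker R \ge 2$ or to $\mathrm{im}\,R$ missing $x,gx$. Any mixed possibility, such as a rank-one $M$, has to be excluded by the $(g,\cdot)$ identities, which exchange $x$ and $gx$.

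Finally, with $R(x) = -\lambda x$ and $R(gx) = -\lambda gx$, I will apply \eqref{idn:RB} to $(1,1)$, $(1,g)$, $(g,1)$ and $(g,g)$. These give quadratic equations in $\alpha_1, \alpha_g$: the $(1-g)$-coefficients satisfy relations such as $2\alpha_1^2 = \alpha_1(2\alpha_1\cdot\ldots+\lambda)$. I expect the solutions $(\alpha_1,\alpha_g) \in \{(-\lambda/2, \lambda/2), (\lambda/2, \lambda/2), (0,\lambda)\}$, where $(0,0)$ is excluded by the kernel dimension. The $x$ and $gx$ coefficients then yield the sign relations between $(\gamma_1,\delta_1)$ and $(\gamma_g,\delta_g)$ in (1) and (2), and force them to vanish in (3). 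I expect the main obstacle to be the middle step, cleanly excluding the non-scalar blocks $M$, since the equations there are coupled with the unknown $\alpha_1$ and $\alpha_g$. I would try first using that $1-g$ annihilates $x+gx$, so $R(1-g)$ acting there simplifies the $(1-g, x+gx)$ pair.
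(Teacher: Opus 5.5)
\paragraph{Step 1 fails.} The identities for $(x,x)$, $(x,gx)$, $(gx,x)$, $(gx,gx)$ cannot force $\alpha_x=\alpha_{gx}=0$. For any values in $L=\langle 1-g,x,gx\rangle$ one has $R(a)^2=2\alpha_aR(a)$ and $R(x)x+xR(x)=2\alpha_x x$. Hence $(x,x)$ and $(gx,gx)$ hold identically. The pairs $(x,gx)$ and $(gx,x)$ only give $\alpha_x^2=\alpha_{gx}^2$ together with relations linear in the $\gamma$'s and $\delta$'s.

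These four identities involve only $R(x)$ and $R(gx)$. They are satisfied, for every $a$ and whatever $R(1),R(g)$ are, by $R(x)=a(1-g)-\lambda x$, $R(gx)=a(1-g)-\lambda gx$; this is operator (1c) for $\ker R=\langle 1,g\rangle$. So the vanishing of $\alpha_x,\alpha_{gx}$ must come from identities involving $1$ or $g$. Write $\{a,b\}=ab+ba$ and use three facts: $g$ anticommutes with $x$ and $gx$; $\{1-g,w\}=2w$ for $w\in\langle x,gx\rangle$; and $\{g,1-g\}=-2(1-g)$. Then the sum of the identities for $(g,x)$ and $(x,g)$ reduces exactly to $\alpha_xR(1)=0$, and likewise $\alpha_{gx}R(1)=0$. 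If $R(1)=0$, then $\ker R=F1$, and the pair $(1,x)$ gives $R(x)+\lambda x\in F1$, which forces $R(x)=-\lambda x$. This is why the paper starts from the dichotomy $(\lambda-2\alpha_g)R(1)=0$ coming from $(g,g)$, which your plan never uses, and disposes of $R(1)=0$ (operator (3)) first.

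\paragraph{The middle step cannot work as planned.} Once $\alpha_x=\alpha_{gx}=0$, the sum of the $(1,x)$ and $(x,1)$ identities gives $R(R(v)+\lambda v)=0$ for $v\in\langle x,gx\rangle$. Their difference, for $\alpha_1\neq 0$, says that $M$ commutes with $x\leftrightarrow gx$. Even together with $\dim\ker R=1$, this does not force $M=-\lambda\,\mathrm{id}$. Two rank-one blocks survive:
$R(x+gx)=0,\ R(x-gx)=-\lambda(x-gx)$, and its mirror $R(x-gx)=0,\ R(x+gx)=-\lambda(x+gx)$.
Both satisfy \emph{every} Rota--Baxter identity in which $x$ or $gx$ occurs (paired with $1$, with $g$, or with each other), for arbitrary $R(1),R(g)\in L$. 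So no combination of the $(g,\cdot)$ identities can exclude them.

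The paper's own argument at this point is also unsound. Its formula for the $(1,x)$ identity has $\gamma_{gx}+\delta_x$ where $\gamma_{gx}-\delta_x$ should stand, and that is how it gets $\gamma_{gx}=0$.

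\paragraph{What actually excludes the rank-one blocks.} It is the hypothesis on the image. The $(1,1)$ identity reads $\lambda R(1)=2\alpha_1R(g)-2R(\gamma_1x+\delta_1gx)$, so $\mathrm{im}\,R=FR(g)+R(\langle x,gx\rangle)$. Then $\dim\mathrm{im}\,R=3$ forces $R$ to be injective on $\langle x,gx\rangle$. Combined with $R(R(v)+\lambda v)=0$, this gives $R(x)=-\lambda x$ and $R(gx)=-\lambda gx$ at once. So the block on $\langle x,gx\rangle$ cannot be settled before the $(1,1)$ identity is brought in.

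After that, your final step works as you expect:
\begin{enumerate*}[label=(\roman*)]
\item $(g,g)$ gives $(\lambda-2\alpha_g)R(1)=0$;
\item the $(1-g)$-component of $(1,g)$ gives $\alpha_1^2+\alpha_g^2=\lambda\alpha_g$;
\item the $x$- and $gx$-components of $(1,1)$ give $\lambda\gamma_1+2\alpha_1\gamma_g=0=\lambda\delta_1+2\alpha_1\delta_g$.
\end{enumerate*}
Together these yield (1)--(3).
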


\begin{proof}
Let $R(1) = \alpha_1 (1 - g) + \gamma_1 x + \delta_1 gx$,  
$R(g) = \alpha_g (1 - g) + \gamma_g x + \delta_g gx$,  
$R(x) = \alpha_x (1 - g) + \gamma_x x + \delta_x gx$,  
$R(gx) = \alpha_{gx} (1 - g) + \gamma_{gx} x + \delta_{gx} gx$. Consider the Rota–Baxter identity \eqref{idn:RB} for the pair $(g, g)$:
\begin{align*}
R(g)R(g) &= R(R(g)g + gR(g) + \lambda 1), \\
2\alpha_g R(g) &= R(2\alpha_g g - 2\alpha_g 1 + \lambda 1), \\
0 &= (2\alpha_g - \lambda)R(1).
\end{align*}
We have two possibilities: either $2\alpha_g - \lambda = 0$, or $R(1) = 0$.  
Suppose first that $R(1) = 0$. Then $\ker R = k 1$.  
The Rota–Baxter identity for the pairs $(1, g)$, $(1, x)$, and $(1, gx)$ gives:
\begin{align*}
&R(R(g) + \lambda g) = 0, \quad R(R(x) + \lambda x) = 0, \quad R(R(gx) + \lambda gx) = 0, \\
\Longrightarrow&
\begin{cases*}
	R(g) + \lambda g = \alpha 1, \\ 
	R(x) + \lambda x = \beta 1, \\ 
	R(gx) + \lambda gx = \gamma 1,
\end{cases*}
\Longrightarrow
\begin{cases*}
	\alpha_g = \alpha = \lambda, \ \gamma_g = \delta_g = 0, \\
	\alpha_x = 0 = \beta, \ \gamma_x = -\lambda, \ \delta_x = 0, \\
	\alpha_{gx} = 0 = \gamma, \ \gamma_{gx} = 0, \ \delta_{gx} = -\lambda. 
\end{cases*} 
\end{align*} 
Thus, we obtain operator (3). Now suppose $2\alpha_g - \lambda = 0$. Considering the difference $R(g)R(x) - R(x)R(g)$, we get:
\begin{align*}
	\alpha_x \alpha_1 = 0, \quad \alpha_x \gamma_1 = 0, \quad \alpha_x \delta_1 = 0. 
\end{align*}
From this, it follows that $\alpha_x = 0$, since otherwise, if $\alpha_x \neq 0$, then $R(1) = 0$. Therefore, from $R(g) + \lambda g = \alpha 1$, we would obtain $-\lambda / 2 = -\lambda$, which is a contradiction.  
Hence, $\alpha_x = 0$. From the Rota–Baxter identity \eqref{idn:RB} for the pairs $(x, gx)$ and $(1, g)$, we obtain respectively:
$$
\alpha_{gx} = \alpha_x, \quad \text{and} \quad \alpha_1^2 = \lambda^2 / 4.
$$
Considering the identity for the pairs $(1, x)$ and $(x, 1)$, we obtain:
\begin{align*}
	\begin{cases*}
		R(R(x) + \lambda x) = \alpha_1(\gamma_{gx} + \delta_x) x 
		+ \alpha_1(\delta_{gx} - \gamma_x) gx, \\
		R(R(x) + \lambda x) = \alpha_1(\delta_x - \gamma_{gx}) x + 
		\alpha_1(\gamma_x - \delta_{gx}) gx, 
	\end{cases*} 
	\Longrightarrow
	\begin{cases*}
		\gamma_{gx} = 0, \\
		\gamma_x = \delta_{gx}.
	\end{cases*}
\end{align*}
Repeating the same reasoning with the pairs $(1, gx)$ and $(gx, 1)$, we find that $\delta_x = 0$.  
Thus we have:
$$
(\gamma_x + \lambda)R(x) = 0, \quad (\gamma_x + \lambda)R(gx) = 0.
$$
Since the kernel of the operator has dimension 1, it follows that $\gamma_x \ne 0$, and therefore $\gamma_x = -\lambda$. To complete the proof, consider the Rota–Baxter identity for the pair $(1, 1)$. We obtain:
$
2\alpha_1 \gamma_g + \lambda \gamma_1 = 0, \
2\alpha_1 \delta_g + \lambda \delta_1 = 0. 
$ Therefore, we get the operators (1) and (2).

Finally note that under the automorphism 
$$
\varphi(g) = g + \dfrac{2\gamma_g}{\lambda} x + \dfrac{2\delta_g}{\lambda} gx, \ \varphi(x) = x, 
$$
the operator (1) with arbitrary $\gamma_g$ and $\delta_g$ is conjugate to the operator (1) with $\gamma_g = \delta_g = 0$. The same holds for operator (2); it suffices to consider the automorphism
\begin{align*}
&\psi(g) = g - \dfrac{2\gamma_g}{\lambda} x - \dfrac{2\delta_g}{\lambda} gx, \ \psi(x) = x.\qedhere
\end{align*}
\end{proof}

\begin{theorem}
Let $\mathrm{im} \, R = \langle 1,x,gx \rangle$. Then we have the following operators:\\*[\smallskipamount]
\begin{enumerate*}[label=\normalfont(\roman*),itemjoin={}]
\item[\normalfont (1a)]
$
\begin{array}{lcllcl}
	R(1) &=& -\lambda 1, & R(x) &=& -\lambda x, \\
	R(g) &=& -\lambda 1 + \gamma_g x + \delta_g gx, & R(gx) &=& -\lambda gx.
\end{array}
$\\*[\medskipamount]

\item[\normalfont(1b)]
$
\begin{array}{lcllcl}
	R(1) &=& -\lambda 1, & R(x) &=& -\lambda x, \\
	R(g) &=& \lambda 1 + \gamma_g x + \delta_g gx, & R(gx) &=& -\lambda gx.
\end{array}
$
\end{enumerate*}
\end{theorem}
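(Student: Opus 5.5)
Write $R(a)=\alpha_a 1+\gamma_a x+\delta_a gx$ for $a\in\{1,g,x,gx\}$; this is possible because $\mathrm{im}\,R=\langle 1,x,gx\rangle$. Since $\dim\ker R=1$, we have $\mathrm{im}\,R=\langle 1,x,gx\rangle$ exactly. In particular $R$ restricted to the image is a bijection from some complement of $\ker R$ onto $I=\langle 1,x,gx\rangle$. The plan follows the previous theorem: first use pairs involving $1$ to pin down $R$ on $I$, then use the pair $(g,g)$ and the kernel condition to get $R(g)$.

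\emph{Step 1: determine $R(1)$.} The pair $(1,1)$ gives $R(1)^2=R(2R(1)+\lambda 1)$. Write $R(1)=\alpha_1 1+n$ with $n\in\langle x,gx\rangle$; then $R(1)^2=\alpha_1^2 1+2\alpha_1 n$. I plan to show first that $1\notin\ker R$. If $R(1)=0$, the pairs $(1,a)$ give $R(R(a)+\lambda a)=0$ for all $a$, so $R(a)+\lambda a\in F1$. For $a=g$ this says $R(g)+\lambda g\in F1$, which is impossible since $R(g)$ has no $g$-component and $\lambda\neq0$.

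\emph{Step 2: determine $R$ on $I$.} Using the pairs $(1,x)$, $(x,1)$, $(1,gx)$, $(gx,1)$ and the differences $R(1)R(a)-R(a)R(1)$, as in the previous proof, I expect to derive $\gamma_{gx}=\delta_x=0$, $\gamma_x=\delta_{gx}$, $\alpha_x=\alpha_{gx}=0$ and $(\gamma_x+\lambda)R(x)=0$. Since $\dim\ker R=1$ and $x,gx$ cannot both lie in the kernel, this forces $R(x)=-\lambda x$ and $R(gx)=-\lambda gx$. Then $R$ acts as $-\lambda$ on $\langle x,gx\rangle$. The pair $(1,1)$ then forces $\alpha_1(\alpha_1+\lambda)=0$ and makes the nilpotent part of $R(1)$ vanish. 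Since $\alpha_1=0$ would give a kernel of dimension at least $2$ (together with the kernel vector involving $g$), we get $R(1)=-\lambda 1$. The kernel is then spanned by an element of the form $g+c1+\dots$, consistent with $\ker R$ being a $1$-dimensional subalgebra-like line by Lemma~2.

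\emph{Step 3: determine $R(g)$.} Now $R$ equals $-\lambda\,\mathrm{id}$ on $I$. The pairs $(1,g)$ and $(g,1)$ give $-\lambda R(g)=R(\lambda g)$, which is automatic. The pair $(g,g)$ reads $R(g)^2=R(R(g)g+gR(g)+\lambda 1)$. Note that $R(g)g+gR(g)=2\alpha_g g$, because the $x$ and $gx$ parts anticommute with $g$. Hence the equation becomes $\alpha_g^2 1+2\alpha_g(\gamma_g x+\delta_g gx)=2\alpha_g R(g)-\lambda^2 1$. This yields $\alpha_g^2=\lambda^2$, while the nilpotent parts match identically. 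The pairs $(g,x)$ and $(g,gx)$ should then be checked; I expect them to hold automatically, since $R(g)x+gR(x)$ lies in $I$. This gives (1a) for $\alpha_g=-\lambda$ and (1b) for $\alpha_g=\lambda$.

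The main obstacle is Step 2: extracting $R(x)$ and $R(gx)$ from the mixed pairs without assuming $R(1)=-\lambda 1$ in advance, and ruling out degenerate branches such as $\alpha_1=0$ using the kernel dimension. If that computation gets messy, I would instead use that $R(I)$ is a subalgebra and that $R+\lambda\,\mathrm{id}$ is also Rota--Baxter. This should let me pass to the dual operator and reduce to the kernel-dimension cases already classified.
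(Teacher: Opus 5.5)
\textbf{There is a genuine gap in Step 2.} Your Steps 1 and 3 are correct, but Step 2, which carries the argument, fails as described.

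The mechanism you borrow from the previous theorem does not transfer. There, the relations $\gamma_{gx}=\delta_x=0$ and $\gamma_x=\delta_{gx}$ came from the $g$-component of $R(1)$, which anticommutes with $x$ and $gx$. Here $R(1)\in I=\langle 1,x,gx\rangle$, and $I$ is commutative because all products of $x$ and $gx$ vanish. So $R(1)R(a)-R(a)R(1)=0$ and $R(1)a-aR(1)=0$ for $a\in\{x,gx\}$, and the commutator differences carry no information.

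The pairs $(1,x)$ and $(x,1)$ both reduce to the single equation $R\bigl(R(x)+\lambda x\bigr)=\alpha_x n_1$, where $n_1$ is the nilpotent part of $R(1)$. Even with $\alpha_x=0$ (which comes from the pair $(x,x)$, not from pairs with $1$), this only says $R(x)+\lambda x\in\ker R$, and similarly for $gx$. Nothing available at that stage prevents $\ker R$ from being a line inside $\langle x,gx\rangle$. So you cannot conclude $R(x)=-\lambda x$ and $R(gx)=-\lambda gx$.

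Your fallback does not help as stated. $R+\lambda\,\operatorname{id}$ is Rota--Baxter of weight $-\lambda$; the weight-$\lambda$ dual is $-\lambda\,\operatorname{id}-R$. You also give no reason why that dual would land in an already classified kernel dimension.

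\textbf{The fix.} Reverse the order and use the pair $(g,g)$ first; it needs no prior information. Your identity $R(g)g+gR(g)=2\alpha_g g$ holds for any $R(g)\in I$, so $(g,g)$ gives $\alpha_g^2 1+2\alpha_g n_g=2\alpha_g R(g)+\lambda R(1)$. Hence $\lambda R(1)=-\alpha_g^2\,1$ with no assumptions.

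Thus $R(1)=\alpha_1 1$. The pair $(1,1)$ gives $\alpha_1(\alpha_1+\lambda)=0$, and your Step 1 rules out $\alpha_1=0$. Therefore $R(1)=-\lambda 1$ and $\alpha_g^2=\lambda^2$.

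The pairs $(1,a)$ now give $R\bigl(R(a)+\lambda a\bigr)=0$ for every $a$. The element $R(g)+\lambda g$ is nonzero with $g$-coefficient $\lambda$, so it spans the one-dimensional kernel. The elements $R(x)+\lambda x$ and $R(gx)+\lambda gx$ have zero $g$-coefficient, so they must vanish.

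This is the paper's proof. The only difference is how you exclude $\alpha_1=0$: you use the pair $(1,g)$, while the paper notes that $R(g),R(x),R(gx)$ would all lie in $\langle x,gx\rangle$. With this order, the $\alpha_g^2=\lambda^2$ part of your Step 3 is already obtained.
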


\begin{proof}
Let $R(1) = \alpha_1 1 + \gamma_1 x + \delta_1 gx$,  
$R(g) = \alpha_g 1 + \gamma_g x + \delta_g gx$,  
$R(x) = \alpha_x 1 + \gamma_x x + \delta_x gx$,  
$R(gx) = \alpha_{gx} 1 + \gamma_{gx} x + \delta_{gx} gx$. Consider the Rota–Baxter identity \eqref{idn:RB} for the pairs $(x, x)$ and $(gx, gx)$:  
$$
	R(x)R(x) = 2\alpha_x R(x), \quad R(gx)R(gx) = 2\alpha_{gx} R(gx).
$$  
It follows that $\alpha_x = \alpha_{gx} = 0$.  

Now consider the Rota–Baxter identity \eqref{idn:RB} for the pairs $(1, 1)$ and $(g, g)$. We obtain:  
\begin{align*}
	\begin{cases*}
		\alpha_g^2 + \lambda \alpha_1 = 0, \\
		\gamma_1 = \delta_1 = 0,
	\end{cases*} \quad 
	\alpha_1 (\alpha_1 + \lambda) = 0.
\end{align*}
Note that $\alpha_1 \ne 0$. Indeed, suppose $\alpha_1 = 0$; then $\ker R = k 1$ and $\alpha_g = 0$. In this case, either $R(g)$ is linearly dependent on $R(x)$ and $R(gx)$, or $R(x)$ and $R(gx)$ are linearly dependent. This contradicts the fact that the kernel has dimension 1. Therefore, $\alpha_1 = -\lambda$, so $\alpha_g^2 = \lambda^2$.

Finally, the Rota–Baxter identities for the pairs $(1, g)$, $(1, x)$, and $(1, gx)$ imply:  
\begin{align*}
	R(R(g) + \lambda g) = 0, \quad R(R(x) + \lambda x) = 0, \quad R(R(gx) + \lambda gx) = 0.
\end{align*}
Since the kernel has dimension 1, the elements $R(g) + \lambda g$ and $R(x) + \lambda x$ must be linearly dependent. Hence, $R(x) + \lambda x = 0$, since obviously there is no nonzero scalar that would make them linearly dependent otherwise. Similarly, $R(gx) + \lambda gx = 0$. Note that (1a) and (1b) lie in the same orbit under the action of the automorphism:
$$
f(g) = -g - \dfrac{2\gamma_g}{\lambda} x, \quad
f(x) = x. 
$$
Also note that the operator (1a) with arbitrary $\gamma_g$ and $\delta_g$ is conjugate to the operator (1a) with $\gamma_g = \delta_g = 0$ under the automorphism
$$
\varphi(g) = g + \dfrac{\gamma_g}{\lambda} x + \dfrac{\delta_g}{\lambda} gx, \ \varphi(x) = x. 
$$
This concludes the proof. 
\end{proof}

\begin{theorem}
Let $\mathrm{im} \, R = \langle 1,g,x-gx \rangle$. Then we have the following operators:\\*[\smallskipamount]
\begin{enumerate*}[label=\normalfont(\roman*),itemjoin={\\*[\medskipamount]}]
\item[\normalfont (1a)]
$
\begin{array}{lcllcl}
	R(1) &=& -\lambda 1, & R(x) &=& -\beta_{gx} 1 + \beta_{gx} g + \gamma_{gx} (x-gx), \\
	R(g) &=& -\lambda g, & R(gx) &=& -\beta_{gx} 1 + \beta_{gx} g + (\gamma_{gx}+\lambda) (x-gx).
\end{array}
$ 

\item[\normalfont (1b)] 
$
\begin{array}{lcllcl}
	R(1) &=& -\lambda 1, & R(x) &=& \beta_{gx} 1 + \beta_{gx} g + \gamma_{gx} (x-gx), \\
	R(g) &=& -\lambda g, & R(gx) &=& \beta_{gx} 1 + \beta_{gx} g + (\gamma_{gx}+\lambda) (x-gx).
\end{array}
$

\item[\normalfont (2a)]
$
\begin{array}{lcllcl}
	R(1) &=& -\frac{3\lambda}{2} 1 - \frac{\lambda}{2} g + \gamma_g (x-gx), & R(x) &=& -\frac{\lambda}{2}(x-gx), \\
	R(g) &=& \frac{\lambda}{2} 1 - \frac{\lambda}{2} g - \gamma_g (x-gx), & R(gx) &=& \frac{\lambda}{2}(x-gx). 
\end{array}
$

\item[\normalfont (2b)]
$
\begin{array}{lcllcl}
	R(1) &=& -\frac{3\lambda}{2} 1 + \frac{\lambda}{2} g + \gamma_g (x-gx), & R(x) &=& -\frac{\lambda}{2}(x-gx), \\
	R(g) &=& -\frac{\lambda}{2} 1 - \frac{\lambda}{2} g + \gamma_g (x-gx), & R(gx) &=& \frac{\lambda}{2}(x-gx).
\end{array}
$
\end{enumerate*}
\end{theorem}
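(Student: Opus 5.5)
We write each value of $R$ in a basis of the image: put $R(1) = \alpha_1 1 + \beta_1 g + \gamma_1 (x-gx)$ and $R(g) = \alpha_g 1 + \beta_g g + \gamma_g (x-gx)$, and similarly $R(x)$ and $R(gx)$ with coefficients indexed by $x$ and $gx$. We then work out the identity \eqref{idn:RB} pair by pair, in the same spirit as the proof of Theorem~\ref{thr:x,gx}. The subalgebra $\langle 1, g, x-gx\rangle$ contains $\langle 1, g\rangle$ as a quotient-like part: modulo the ideal spanned by $x - gx$ it is just $\langle 1,g\rangle$. So the first step is to project the pairs $(1,1)$, $(1,g)$, $(g,1)$, $(g,g)$ onto the coordinates of $1$ and $g$. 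This produces exactly the system already solved in Theorem~\ref{thr:x,gx}, now with the unknowns $\alpha_1, \beta_1, \alpha_g, \beta_g$. Hence we get the dichotomy $2\beta_g + \lambda \neq 0$ or $2\beta_g + \lambda = 0$. In the first case we expect $R(1) = -\lambda 1$ and $\beta_g = -\lambda$ with $\alpha_g = 0$. In the second case we expect $\beta_g = -\lambda/2$, $\beta_1 = -\alpha_g = \pm\lambda/2$ and $\alpha_1 = -3\lambda/2$. The degenerate roots ($\beta_g = 0$, or $\alpha_1 = \pm\lambda/2$) are excluded as before, since they would make $R(1)$ and $R(g)$ proportional and force $\dim\ker R \ge 2$. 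The $(x-gx)$-coordinates of these same four identities then fix $\gamma_1$ and $\gamma_g$ (in the second case they give $\gamma_1 = \mp \gamma_g$ in the pattern of (2a), (2b)).

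Next we handle $R(x)$ and $R(gx)$. Since $\dim \ker R = 1$ and $R(1), R(g)$ are independent, the kernel is spanned by an element involving $x$ and $gx$. The pairs $(1,x)$, $(1,gx)$, $(x,1)$, $(gx,1)$ give $R(R(x)+\lambda x) = R(1)$-terms, and the pairs $(g,x)$, $(x,g)$ do the same for $g$. In case (1), where $R(1) = -\lambda 1$, the pair $(1,x)$ reduces to $R(R(x) + \lambda x) = -\lambda R(x)$, and likewise for $gx$. The pairs $(x,x)$, $(gx,gx)$, $(x,gx)$ then form a system analogous to \eqref{eq:g,gx} and \eqref{eq:g,g} from Theorem~\ref{thr:1-g,x-gx}. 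We expect this system to give $\alpha_x^2 = \beta_x^2$, the relation $R(gx) - R(x) = \lambda(x-gx)$, and the sign choice that separates (1a) from (1b). In case (2), the $(1,x)$ and $(g,x)$ relations should force the $1$- and $g$-coordinates of $R(x)$ and $R(gx)$ to vanish. We then expect $R(x) = -R(gx) = -\frac{\lambda}{2}(x-gx)$, with kernel $F(x+gx)$.

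The main obstacle is the bookkeeping for the mixed pairs $(g,x)$, $(x,g)$, $(g,gx)$, $(gx,g)$ in case (1). There $R(g) = -\lambda g$ acts by conjugation, which swaps $x$ and $gx$ up to sign, and we must show that no further solutions appear. Our plan is to subtract the identities for $(g,x)$ and $(x,g)$, as was done in Theorem~\ref{thr:1-g,x-gx}. This isolates $\alpha_x + \alpha_{gx}$ and $\beta_x - \beta_{gx}$ and should force $\alpha_{gx} = \alpha_x$ and $\beta_{gx} = \beta_x$. Finally, we check that every listed operator satisfies \eqref{idn:RB} on all sixteen basis pairs. As in earlier sections, we note which members are conjugate: (1a) and (1b) under $g\mapsto -g$, and (2a) and (2b) under an automorphism of the form $g \mapsto -g + c(x-gx)$. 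We also note that $\gamma_g$ can be removed by conjugating with $g \mapsto g + c(x-gx)$.
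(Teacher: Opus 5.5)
\textbf{The gap is in your first step.} Projecting the identities for $(1,1)$, $(1,g)$, $(g,1)$, $(g,g)$ onto the $1,g$-coordinates does not give ``exactly the system already solved in Theorem~\ref{thr:x,gx}''. That reduction worked there only because $R(x)=R(gx)=0$, so the $x$- and $gx$-parts of $R(1),R(g)$ were annihilated when $R$ was applied to them. Here they are not. For instance, $R(1)R(1)=R(2R(1)+\lambda 1)=(2\alpha_1+\lambda)R(1)+2\beta_1R(g)+2\gamma_1R(x-gx)$. The pairs $(1,g)$ and $(g,1)$ likewise contain $(\gamma_g+\gamma_1)R(x-gx)$ and $(\gamma_g-\gamma_1)R(x-gx)$. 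Now $R(x-gx)=(\alpha_x-\alpha_{gx})1+(\beta_x-\beta_{gx})g+(\gamma_x-\gamma_{gx})(x-gx)$, and its $1$- and $g$-components are unknown at this stage. Only the pair $(g,g)$ is clean, giving $(\beta_g^2-\alpha_g^2)1=(2\beta_g+\lambda)R(1)$. Passing to the quotient by $F(x-gx)$ would require $R(x-gx)\in F(x-gx)$; that holds for the final operators but is precisely what has to be proved.

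Your exclusion of degenerate roots fails for the same reason. In Theorem~\ref{thr:x,gx}, a proportionality between $R(1)$ and $R(g)$ contradicted $\ker R=\langle x,gx\rangle$. Here it only produces one kernel vector, which contradicts nothing until a second kernel vector is known. The root $R(1)=0$ also needs its own argument, e.g.\ via the pair $(1,x)$. Since your treatment of $R(x),R(gx)$ in case (2) uses the values of $\alpha_1,\beta_1,\alpha_g$ produced by this invalid reduction, the argument is circular as it stands.

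\textbf{How to repair it.} The order has to be reversed, as in the paper, and the real work lies in the case you did not flag.

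If $2\beta_g+\lambda\neq0$, then $(g,g)$ gives $R(1)\in F1$. The identity for $(1,1)$ is then clean, and $R(1)=-\lambda 1$ once $R(1)=0$ is excluded. The pairs $(1,a)$ give $R(R(a)+\lambda a)=0$, not $R(R(x)+\lambda x)=-\lambda R(x)$ as you wrote. So $R$ acts as $-\lambda\,\mathrm{id}$ on $\mathrm{im}\,R=L$; equivalently, the kernel vectors $R(g)+\lambda g$, $R(x)+\lambda x$, $R(gx)+\lambda gx$ are proportional. This yields $R(g)=-\lambda g$ and $R(gx)-R(x)=\lambda(x-gx)$ at once, and $(x,x)$ gives $\alpha_x^2=\beta_x^2$. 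No mixed-pair bookkeeping is needed.

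If $2\beta_g+\lambda=0$, you must first use $(1,x)$, $(g,x)$, $(x,1)$, $(x,g)$ while $\alpha_1,\beta_1,\gamma_1$ are still unknown. These give $\beta_x+\beta_{gx}=\gamma_x+\gamma_{gx}=0$ and $\beta_1\alpha_{gx}=\beta_1\beta_{gx}=\gamma_1\beta_{gx}=0$. Next rule out $\beta_1=0$, and conclude $\alpha_x=\beta_x=\alpha_{gx}=\beta_{gx}=0$, $\ker R=F(x+gx)$ and $\gamma_x=-\lambda/2$. Only now, with $R(x-gx)=-\lambda(x-gx)$ known, do $(1,1)$ and $(1,g)$ form a closed, solvable system. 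The root $2\alpha_1+\lambda=0$ can then be discarded, because it would produce a second kernel vector besides $x+gx$.

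\textbf{Conjugacy claim.} Your claim for (2a)/(2b) is wrong. Every automorphism with $g\mapsto -g+\cdots$ sends $x-gx$ into $F(x+gx)$, so it does not preserve $\mathrm{im}\,R=L$. For both pairs (1a)/(1b) and (2a)/(2b) one needs antiautomorphisms, such as $g\mapsto -g$, $x\mapsto x$.
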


\begin{proof}
Let  
$R(1) = \alpha_1 1 + \beta_1 g + \gamma_1 (x - gx)$,  
$R(g) = \alpha_g 1 + \beta_g g + \gamma_g (x - gx)$,  
$R(x) = \alpha_x 1 + \beta_x g + \gamma_x (x - gx)$,  
$R(gx) = \alpha_{gx} 1 + \beta_{gx} g + \gamma_{gx} (x - gx)$. Considering the Rota–Baxter identity \eqref{idn:RB} for the pairs $(g, g)$, $(x, x)$, and $(gx, gx)$, we obtain respectively:  
$$
(\beta_g^2 - \alpha_g^2) 1 = (2\beta_g + \lambda) R(1), \quad 
\alpha_x^2 = \beta_x^2, \quad 
\alpha_{gx}^2 = \beta_{gx}^2.
$$
We consider two cases: $2\beta_g + \lambda = 0$ and $2\beta_g + \lambda \neq 0$. Suppose first that $2\beta_g + \lambda \neq 0$. Then $R(1) = \alpha 1$, and from the Rota–Baxter identity for the pair $(1, 1)$, we conclude $\alpha(\alpha + \lambda) = 0$. If $\alpha = 0$, then $\ker R = k 1$, and $R(R(x) + \lambda x) = 0$, which implies $R(x) + \lambda x = \beta 1$, leading to $\gamma_x = -\lambda$, $\gamma_x = 0$, a contradiction. Therefore, $\alpha = -\lambda$, i.e., $R(1) = -\lambda 1$.  

Now, considering the Rota–Baxter identity for the pairs $(1, g)$ and $(1, x)$, we get:  
$$
R(R(g) + \lambda g) = 0, \quad R(R(x) + \lambda x) = 0.
$$  
Thus $\beta (R(g) + \lambda g) = \gamma (R(x) + \lambda x)$, where either $\beta \ne 0$ or $\gamma \ne 0$.  
If $\gamma \ne 0$, then $\beta \gamma_g = \gamma_x + \lambda$, $\beta \gamma_g = \gamma_x$, implying $\lambda = 0$, a contradiction.  
If $\beta \ne 0$, then $\gamma_g = \gamma \gamma_x + \gamma \lambda$, $\gamma_g = \gamma \gamma_x$, implying $\gamma = 0$, so $R(g) = -\lambda g$.  

Since $R(x) + \lambda x$ and $R(gx) + \lambda gx$ are linearly dependent, there exist $\widetilde{\alpha}, \widetilde{\beta}$ such that  
$$
\widetilde{\alpha}(R(x) + \lambda x) = \widetilde{\beta}(R(gx) + \lambda gx).
$$  
This leads to the system:  
\begin{align*}
	\begin{cases*}
		\widetilde{\alpha} \gamma_x + \widetilde{\alpha} \lambda = 
		\widetilde{\beta} \gamma_{gx}, \\
		-\widetilde{\alpha} \gamma_x = -\widetilde{\beta} \gamma_{gx} + \widetilde{\beta} \lambda,
	\end{cases*} \Longrightarrow
	\lambda (\widetilde{\alpha} - \widetilde{\beta}) = 0
	\Longrightarrow
	\widetilde{\alpha} = \widetilde{\beta}. 
\end{align*}  
Hence, $R(x) + \lambda x = R(gx) + \lambda gx$, which gives  
$
\gamma_{gx} = \gamma_x + \lambda, \ \alpha_x = \alpha_{gx}, \ \beta_x = \beta_{gx}.
$
We have thus obtained operators (1a) and (1b).  

Now suppose $2\beta_g + \lambda = 0$. Then $\alpha_g^2 = \beta_g^2$. Consider the Rota–Baxter identity for the pairs $(1, x)$ and $(g, x)$. We obtain:
\begin{multline*}
	\begin{cases*}
		\beta_x(\beta_1 - \alpha_g) = (\alpha_1 + \gamma_x + \lambda)\alpha_x + (\beta_1 - \gamma_x)\alpha_{gx}, \\
		\beta_x(\gamma_x + \frac{\lambda}{2}) + (\beta_1 - \gamma_x)\beta_{gx} = 0, \\
		\beta_x(\gamma_1 - \gamma_g) = \gamma_x(\beta_1 + \lambda + \gamma_x) + (\beta_1 - \gamma_x)\gamma_{gx},
	\end{cases*} \\
	\begin{cases*}
		\beta_x(\alpha_1 + \frac{\lambda}{2}) = (\gamma_x - \alpha_g)\alpha_x - (\gamma_x + \frac{\lambda}{2})\alpha_{gx}, \\
		\beta_x(\beta_1 - \gamma_x) + (\gamma_x + \frac{\lambda}{2})\beta_{gx} = 0, \\
		\beta_x(\gamma_g - \gamma_1) = (\gamma_x + \frac{\lambda}{2})(\gamma_{gx} - \gamma_x),
	\end{cases*}
\end{multline*}
respectively. Summing the second and third equations in each system yields:
\begin{align*}
	\begin{cases*}
		(2\beta_1 + \lambda)(\beta_x + \beta_{gx}) = 0, \\
		(2\beta_1 + \lambda)(\gamma_x + \gamma_{gx}) = 0.
	\end{cases*}
\end{align*}
Proceeding similarly with the pairs $(x, 1)$ and $(x, g)$, we obtain:
\begin{align*}
	\begin{cases*}
		(2\beta_1 - \lambda)(\beta_x + \beta_{gx}) = 0, \\
		(2\beta_1 - \lambda)(\gamma_x + \gamma_{gx}) = 0.
	\end{cases*}
\end{align*}
Therefore, $\beta_x + \beta_{gx} = 0$ and $\gamma_x + \gamma_{gx} = 0$. Consider the Rota–Baxter identity for the pair $(x, 1)$:
\begin{align*}
	\begin{cases*}
		\beta_x(\beta_1 - \alpha_g) = (\alpha_1 + \gamma_x + \lambda)\alpha_x - (\beta_1 + \gamma_x)\alpha_{gx}, \\
		\beta_{gx}(\gamma_x + \beta_1) - (\frac{\lambda}{2} + \gamma_x)\beta_x = 0, \\
		\beta_x(\gamma_1 + \gamma_g) = -\gamma_x(\lambda - \beta_1 + \gamma_x) + (\beta_1 + \gamma_x)\gamma_{gx}. 
	\end{cases*}
\end{align*}
Combining this with the earlier system for $(1, x)$, we deduce:  
$$
\beta_1 \alpha_{gx} = 0, \
\beta_1 \beta_{gx} = 0, \
\gamma_1 \beta_{gx} = 0.
$$  
Suppose that $\beta_1 = 0$. Then $\gamma_1 \neq 0$, since otherwise $R(1) = \alpha 1$ and it implies $R(g) = -\lambda g$, contradicting the fact that $2\beta_g + \lambda = 0$. So, $\beta_{gx} = \beta_x = \alpha_x = \alpha_{gx} = 0$. Considering the second coordinate of the \eqref{idn:RB} identity for the pair $(1, g)$, we have $\beta_g(\beta_g + \lambda) = 0$. Contradiction. Thus $\beta_1 \neq 0$. We have $\beta_{gx} = \beta_x = \alpha_x = \alpha_{gx} = 0$ and $\ker R = \langle x + gx \rangle$. Obviously, $\gamma_x \neq 0$, and it implies that $\gamma_x = -\lambda / 2$. Finally, the \eqref{idn:RB} identity for the pairs $(1,1)$ and $(1,g)$ yields the following:
$$
\begin{cases*}
(2\alpha_1 + \lambda)(\alpha_g + \beta_1) = 0, \\
\lambda^2 - 4\beta_1^2 = 0, \\
(\lambda - 2\beta_1)(\gamma_1 + \gamma_g) = 0, 
\end{cases*}
\quad 
\begin{cases*}
\alpha_1(\lambda + \alpha_1) + 2\alpha_g \beta_1 - \beta_1^2 = 0, \\
\lambda \gamma_1 - 2\beta_1 \gamma_g = 0. 
\end{cases*}
$$ 
Obviously, $2\alpha_1 + \lambda \neq 0$, since otherwise we would obtain a linear dependence between $R(1)$ and $R(g)$, which would imply that $\dim \ker R = 2$. Therefore, $\alpha_g = -\beta_1$ and $(\alpha_1 + \lambda / 2)^2 = \lambda^2$. We have obtained the operators (2a) and (2b).

The operators (1a) and (1b) are conjugated under antiautomorphism 
$
\psi(g) = -g, \ \psi(x) = -x.
$
The operators (2a) and (2b) are also conjugated under antiautomorphism 
$
\psi(g) = -g, \ \psi(x) = x.
$ 

Note that under automorphism 
$$
\varphi(g) = g - \dfrac{2\gamma_g}{\lambda} x + \dfrac{2\gamma_g}{\lambda} gx, \ \varphi(x) = x, 
$$
the operator (2a) with arbitrary $\gamma_g$ is conjugate to the operator (2a) with $\gamma_g = 0$.
\end{proof}

\section{Dimension of kernel is 0}

\begin{theorem}
Let $\ker R = 0$. Then we have the following operators:\\*[\smallskipamount]
\begin{enumerate*}[label=\normalfont(\arabic*),itemjoin={}]
\item 
$
\begin{array}{lcllcl}
R(1) &=& -\lambda 1, & R(x) &=& -\lambda x, \\
R(g) &=& -\lambda g, & R(gx) &=& -\lambda gx. 
\end{array}
$\\*[\medskipamount]

\item[\normalfont (2a)]
$
\begin{array}{lcllcl}
	R(1) &=& -\frac{3\lambda}{2} 1 - \frac{\lambda}{2} g + \gamma_g x + \delta_g gx, & R(x) &=& -\lambda x, \\
	R(g) &=& \frac{\lambda}{2} 1 - \frac{\lambda}{2} g - \gamma_g x - \delta_g gx, & R(gx) &=& -\lambda gx. 
\end{array}
$\\*[\medskipamount]

\item[\normalfont (2b)]
$
\begin{array}{lcllcl}
	R(1) &=& -\frac{3\lambda}{2} 1 + \frac{\lambda}{2} g + \gamma_g x + \delta_g gx, & R(x) &=& -\lambda x, \\
	R(g) &=& -\frac{\lambda}{2} 1 - \frac{\lambda}{2} g + \gamma_g x + \delta_g gx, & R(gx) &=& -\lambda gx. 
\end{array}
$
\end{enumerate*}
\end{theorem}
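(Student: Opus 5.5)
Since $\ker R = 0$, the operator $R$ is invertible. The plan is to exploit the identity for pairs involving $1$ to pin down $R$ on the nilpotent part, and then reduce to the computation already done when $\ker R = \langle x, gx\rangle$ (Theorem~\ref{thr:x,gx}).

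\textbf{Step 1: the nilpotent part.} For $a\in H_4$, \eqref{idn:RB} applied to $(1,a)$ and $(a,1)$ gives
\[
R(1)R(a)=R\bigl(R(1)a+R(a)+\lambda a\bigr), \qquad R(a)R(1)=R\bigl(aR(1)+R(a)+\lambda a\bigr).
\]
Applying $R^{-1}$ and subtracting yields $R^{-1}([R(1),R(a)])=[R(1),a]$, i.e.\ $R$ commutes with $\operatorname{ad}R(1)$. More directly, I will use the standard fact that for invertible $R$ the map $a\mapsto R(a)+\lambda a$ is governed by the relation $R(a)R(b)=R(R(a)b+aR(b)+\lambda ab)$ rewritten with $u=R(a)$, $v=R(b)$:
\[
R^{-1}(uv)=uR^{-1}(v)+R^{-1}(u)v+\lambda R^{-1}(u)R^{-1}(v).
\]
Equivalently, $D:=\lambda R^{-1}+\operatorname{id}$ satisfies $D(uv)=D(u)D(v)$. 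So $D$ is an algebra endomorphism of $H_4$, and I will work with $D$ rather than $R$.

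\textbf{Step 2: classify $D$.} The image $D(H_4)$ is a subalgebra, possibly non-unital. If $D$ is injective, $D$ is an automorphism and $D(1)=1$. If $D(1)=e$ is an idempotent, it is $0$, $1$, or of the form $\tfrac12(1\pm g)+\dots$. We use the subalgebra lemmas and the automorphism description given earlier to list endomorphisms with idempotent $D(1)$. For instance, $D(x)$ must satisfy $D(x)^2=0$ and $D(g)^2=D(1)$.

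Then we recover $R=\lambda(D-\operatorname{id})^{-1}$, requiring $1$ not to be an eigenvalue of $D$:
\begin{itemize}
\item $D=0$ gives operator (1), $R=-\lambda\operatorname{id}$.
\item $D$ an automorphism fixes $1$, so it is excluded.
\item $D(1)=e$ a nontrivial idempotent: the image is contained in $eH_4e$. Since $x$ is nilpotent and anticommutes with $g$, one gets $ex e$-type constraints forcing $D(x)=D(gx)=0$. Hence $R(x)=-\lambda x$ and $R(gx)=-\lambda gx$, and $D$ restricted to $\langle 1,g\rangle$ maps onto $Fe$.
\end{itemize}
In the last case, $D(g)=\pm e$ with $e=\tfrac12(1\mp g)+\gamma x+\delta gx$. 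Inverting $D-\operatorname{id}$ on $\langle1,g\rangle$ modulo $\langle x,gx\rangle$ produces the coefficients $-\tfrac{3\lambda}{2},\mp\tfrac{\lambda}{2}$, giving (2a) and (2b).

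\textbf{Main obstacle.} The hardest step is showing that $D(x)=0$ when $D(1)$ is a nontrivial idempotent. If that fails, I would instead follow the paper's style. Write $R(x)=\alpha_x1+\dots$ and use the pairs $(x,x)$, $(1,x)$, $(x,1)$, $(g,x)$ to force $\alpha_x=\beta_x=0$ and then $R(x)=-\lambda x$, mirroring the computations with $\langle 1,g,x-gx\rangle$. The pairs $(1,1)$, $(1,g)$, $(g,g)$ then reduce verbatim to the system solved in Theorem~\ref{thr:x,gx}, whose solutions are the operators (1), (2a) and (2b) there.
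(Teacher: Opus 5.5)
Your route is correct, and it differs genuinely from the paper's. The paper stays in coordinates. The identity for $(g,g)$ gives $(\beta_g^2-\alpha_g^2)1=(2\beta_g+\lambda)R(1)$. If $2\beta_g+\lambda\neq 0$, then $R(1)=-\lambda 1$ and injectivity forces $R=-\lambda\operatorname{id}$. If $2\beta_g+\lambda=0$, it builds a polynomial system from the commutators $[R(1),R(x)]$, $[R(1),R(gx)]$, the products $R(1)R(x)\pm R(g)R(x)$, and the pairs $(1,1)$, $(1,g)$, using $\ker R=0$ repeatedly to discard degenerate branches.

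You instead observe that $D=\lambda R^{-1}+\operatorname{id}$ is a (possibly non-unital) endomorphism. So invertible Rota--Baxter operators of weight $\lambda\neq0$ correspond bijectively to endomorphisms without eigenvalue $1$, and you classify these via the idempotent $D(1)$. The paper's two branches are exactly your cases $D=0$ and $D(1)$ a nontrivial idempotent. What your approach buys:
\begin{itemize}
\item sufficiency for free, whereas the paper's argument only derives necessary conditions;
\item an explanation of (2a), (2b) as $R=-\lambda(\operatorname{id}+D)$ with $D$ of rank one and $D^2=0$;
\item the same correspondence in any finite-dimensional algebra.
\end{itemize}
The paper's computation has the merit of being uniform with the other kernel dimensions, where $R$ is not invertible and your trick is unavailable.

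Several points need tightening.

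\textbf{The step you flag as the main obstacle is one line.} Write $e=e_0+n$ with $e_0=\tfrac12(1+\sigma g)$, $\bar{e}_0=\tfrac12(1-\sigma g)$, $n\in J=\langle x,gx\rangle$. Since $J^2=0$ and $e_0 j=j\bar{e}_0$ for $j\in J$, you get $eje=e_0je_0=j\bar{e}_0e_0=0$. Also $ege=\sigma e$, so $eH_4e=Fe$. Then $D(x)^2=D(gx)^2=0$ give $D(x)=D(gx)=0$, and $D(g)^2=e$ gives $D(g)=\tau e$ with $\tau=\pm1$.

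\textbf{Split cases on $D(1)\in\{0,1,\text{nontrivial}\}$, not on injectivity.} A unital non-injective $D$, such as $a\mapsto\chi(a)1$ for a character $\chi$, is not an automorphism. It is excluded for the same reason, namely $(D-\operatorname{id})(1)=0$.

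\textbf{The sign correlation you write down needs its reason.} We have $D(e)=\tfrac12(1+\sigma\tau)e$, so $\tau=\sigma$ makes $1$ an eigenvalue. With $\tau=-\sigma$ one gets $D^2=0$, hence $R=\lambda(D-\operatorname{id})^{-1}=-\lambda(\operatorname{id}+D)$ exactly.

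\textbf{Do not invert \emph{modulo} $\langle x,gx\rangle$.} The nilpotent part $n$ of $e$ is precisely what produces $\gamma_gx+\delta_ggx=-\lambda n$. It appears with opposite signs in $R(1)$ and $R(g)$ for (2a), and with equal signs for (2b).

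\textbf{Discard the fallback.} The pairs $(1,1)$, $(1,g)$, $(g,g)$ do not reduce verbatim to the system of the theorem for $\ker R=\langle x,gx\rangle$, because now $R(x)=-\lambda x$ and $R(gx)=-\lambda gx$ instead of $0$. In the $(1,1)$ identity this reverses the sign relation between the nilpotent parts of $R(1)$ and $R(g)$. That is exactly why (2a), (2b) here differ from (2a), (2b) there.
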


\begin{proof}
Since $(\beta_g^2 - \alpha_g^2) 1 = (2\beta_g + \lambda) R(1)$, by \eqref{idn:RB} applied to the pair $(g,g)$, we have two cases: either $2\beta_g + \lambda = 0$ or $2\beta_g + \lambda \neq 0$. If $2\beta_g + \lambda \neq 0$, then $R(1) = -\lambda 1$. Thus $R(g) + \lambda g = 0$, $R(x) + \lambda x = 0$, $R(gx) + \lambda gx = 0$. We have obtained operator (1).

Let $2\beta_g + \lambda = 0$. Then $\alpha_g^2 = \beta_g^2$. Consider the expressions $R(1)R(x) - R(x)R(1)$ and $R(1)R(gx) - R(gx)R(1)$. We have
$$
\begin{cases*}
\beta_1 \alpha_{gx} = 0, \\
\beta_1 \beta_{gx} = 0, \\
\beta_x \delta_1 = \beta_1 (\delta_x - \gamma_{gx}), \\
\beta_x \gamma_1 = \beta_1 (\gamma_x - \delta_{gx}), 
\end{cases*}
\quad
\begin{cases*}
\beta_1 \alpha_x = 0, \\
\beta_1 \beta_x = 0, \\
\beta_{gx} \delta_1 = \beta_1 (\delta_{gx} - \gamma_x), \\
\beta_{gx} \gamma_1 = \beta_1 (\gamma_{gx} - \delta_x). 
\end{cases*}
$$
Suppose that $\beta_1 = 0$. Since one of the elements $\gamma_1$ or $\delta_1$ is nonzero, we have $\beta_x = \beta_{gx} = 0$ and $\alpha_x = \alpha_{gx} = 0$. Considering the second coordinate of the \eqref{idn:RB} identity for the pair $(1, g)$, we have $\beta_g(\beta_g + \lambda) = 0$. Contradiction. Thus $\beta_1 \neq 0$. We have $\alpha_x = \beta_x = \alpha_{gx} = \beta_{gx} = 0$ and $\delta_x = \gamma_{gx}$, $\delta_{gx} = \gamma_x$. Comparing the expressions $R(1)R(x) - R(g)R(x)$ and $R(1)R(x) + R(g)R(x)$, we obtain
$$
(\gamma_x - \gamma_{gx})(\lambda + \gamma_x - \gamma_{gx}) = 0, \quad
(\gamma_x + \gamma_{gx})(\lambda + \gamma_x + \gamma_{gx}) = 0. 
$$
Clearly, $\gamma_x$ cannot equal $\gamma_{gx}$ or $-\gamma_{gx}$, since otherwise we would obtain a linear dependence between $R(x)$ and $R(gx)$. Thus, $\gamma_x = -\lambda$ and $\gamma_{gx} = 0$. Finally, the \eqref{idn:RB} identity for the pairs $(1,1)$ and $(1,g)$ yields the following:
\[
\begin{cases*}
(2\alpha_1 + \lambda)(\alpha_g + \beta_1) = 0, \\
\lambda^2 - 4\beta_1^2 = 0, \\
\lambda(\gamma_g - \delta_1) + \beta_1(\delta_g - \gamma_1) = 0, \\
\lambda \gamma_1 - 2\beta_1(\gamma_g - \delta_1) - \lambda \delta_g = 0,
\end{cases*}
\quad 
\begin{cases*}
\alpha_1(\lambda + \alpha_1) + 2\alpha_g \beta_1 - \beta_1^2 = 0, \\
\lambda \gamma_1 - 2\beta_1 \gamma_g = 0, \\
\lambda \delta_1 - 2\beta_1 \delta_g = 0. 
\end{cases*}
\]
Obviously, $2\alpha_1 + \lambda \neq 0$, since otherwise we would obtain a linear dependence between $R(1)$ and $R(g)$. Therefore, $\alpha_g = -\beta_1$ and $(\alpha_1 + \lambda / 2)^2 = \lambda^2$. We have obtained the operators (2a) and (2b), which are conjugate under the action of the automorphism:
\begin{align*}
&f(g) = -g + \dfrac{4\gamma_g}{\lambda}x, \quad
f(x) = -x. 
\end{align*}
Moreover note that the operator (2a) with arbitrary $\gamma_g$ and $\delta_g$ is conjugate to the operator (2a) with $\gamma_g = \delta_g = 0$ under the automorphism
\begin{align*}
&\varphi(g) = g - \dfrac{2\gamma_g}{\lambda} x - \dfrac{2\delta_g}{\lambda} gx, \ \varphi(x) = x. \qedhere
\end{align*}
\end{proof}

\section{Dualization}

Let us call the operator $-\lambda \operatorname{id} - R$ dual to the operator $R$. Most of the operators we have obtained are parameter‑free, as follows directly from the proofs. Consequently, computing their dual operators is straightforward and pleasant. However, there are also operators that depend on certain parameters. We now examine these parameter‑dependent operators and determine their duals. Consider the operator: 
$$
\begin{array}{lcllcl}
R(1) &=& 0, & R(x) &=& \alpha_x 1 + \alpha_x g - \lambda x, \\
R(g) &=& 0, & R(gx) &=& \alpha_x 1 + \alpha_x g - \lambda gx
\end{array}
$$
and its dual 
$$
\begin{array}{lcllcl}
R(1) &=& -\lambda 1, & R(x) &=& -\alpha_x 1 - \alpha_x g, \\
R(g) &=& -\lambda g, & R(gx) &=& -\alpha_x 1 - \alpha_x g.
\end{array}
$$
Obviously, if $\alpha_x = 0$, then the dual operator is one of the ones from Theorem \ref{thr:x,gx}. If $\alpha_x \neq 0$, then the dual operator has 2-dimensional kernel and its looks like $\langle 1+g+\gamma x + \delta gx, x-gx \rangle$. Its kernel is isomorphic to $\langle 1-g, x-gx \rangle$. So, the dual operator is conjugated with the operator (3) from Theorem \ref{thr:1-g,x-gx} with $\beta_{gx} \neq 0$. 

A similar situation occurs with the operators from Theorems \ref{thr:1,x} and \ref{thr:1,x-gx}: their duals are conjugate to the operator (3) in Theorem \ref{thr:1-g,x-gx} with \(\beta_{gx} = 0\) and either \(\gamma_{gx} \neq -\lambda/2\) or \(\gamma_{gx} = -\lambda/2\), respectively.

In conclusion, consider the operator (1) from Theorem \ref{thr:1-g,x-gx}: 
$$
\begin{array}{lcllcl}
R(1) &=& -\frac{\lambda}{2} 1 - \frac{\lambda}{2} g, & R(x) &=& -\frac{\lambda}{2} x - \frac{\lambda}{2} gx, \\
R(g) &=& -\frac{\lambda}{2} 1 - \frac{\lambda}{2} g, & R(gx) &=& -\frac{\lambda}{2} x - \frac{\lambda}{2} gx
\end{array}
$$
and its dual
$$
\begin{array}{lcllcl}
R(1) &=& -\frac{\lambda}{2} 1 + \frac{\lambda}{2} g, & R(x) &=& -\frac{\lambda}{2} x + \frac{\lambda}{2} gx, \\
R(g) &=& \frac{\lambda}{2} 1 - \frac{\lambda}{2} g, & R(gx) &=& \frac{\lambda}{2} x - \frac{\lambda}{2} gx.
\end{array}
$$
This operator dual to itself because the dual operator is conjugated with the operator (1) from Theorem \ref{thr:1-g,x-gx}. Finally, we present a complete list of non-trivial Rota--Baxter operators on the Sweedler algebra \(H_4\), up to conjugation and dualization.

\begin{theorem*}
Up to conjugation and dualization, the following non‑trivial Rota--Baxter operators on the Sweedler algebra are obtained:\\*[\medskipamount]
\begin{varwidth}{\textwidth}
\begin{enumerate}[label=\normalfont(\arabic*),itemjoin={\\*[\medskipamount]}]
\item
$
\begin{array}{lcllcl}
	R(1) &=& 0, & R(x) &=& \alpha_x 1 + \alpha_x g -\lambda x, \\
	R(g) &=& 0, & R(gx) &=& \alpha_x 1 + \alpha_x g -\lambda gx, \quad
	\alpha_x \neq 0.
\end{array}
$

\medskip

\item\quad
$
\begin{array}{lcllcl}
	R(1) &=& 0, & R(x) &=& 0, \\
	R(g) &=& -\lambda 1 - \lambda g, & R(gx) &=& -\lambda x - \lambda gx.
\end{array}
$

\medskip

\item
$
\begin{array}{lcllcl}
	R(1) &=& 0, & R(x) &=& -\frac{\lambda}{2} x - \frac{\lambda}{2} gx, \\
	R(g) &=& -\lambda 1 - \lambda g, & R(gx) &=& -\frac{\lambda}{2} x - \frac{\lambda}{2} gx.
\end{array}
$

\medskip

\item\quad
$
\begin{array}{lcllcl}
R(1) &=& -\lambda 1, & R(x) &=& 0, \\
R(g) &=& -\lambda g, & R(gx) &=& 0. 
\end{array}
$

\medskip

\item
$
\begin{array}{lcllcl}
	R(1) &=& -\frac{3\lambda}{2} 1 - \frac{\lambda}{2} g, & R(x) &=& 0,\\
	R(g) &=& \frac{\lambda}{2} 1 - \frac{\lambda}{2} g, & R(gx) &=& 0.
\end{array}
$
\end{enumerate}
\end{varwidth}\\*[\medskipamount]
\begin{varwidth}{\textwidth}
\begin{enumerate}[label=\normalfont(\arabic*),itemjoin={\\*[\medskipamount]}]
\setcounter{enumi}{5}
\item\quad
$
\begin{array}{lcllcl}
	R(1) &=& -\frac{\lambda}{2} 1 - \frac{\lambda}{2} g, & R(x) &=& -\frac{\lambda}{2} x - \frac{\lambda}{2} gx, \\
	R(g) &=& -\frac{\lambda}{2} 1 - \frac{\lambda}{2} g, & R(gx) &=& -\frac{\lambda}{2} x - \frac{\lambda}{2} gx.
\end{array}
$ 

\medskip

\item 
$
\begin{array}{lcllcl}
	R(1) &=& \frac{\lambda}{2} 1 - \frac{\lambda}{2} g, & R(x) &=& -\frac{\lambda}{2} x - \frac{\lambda}{2} gx, \\
	R(g) &=& \frac{\lambda}{2} 1 - \frac{\lambda}{2} g, & R(gx) &=& -\frac{\lambda}{2} x - \frac{\lambda}{2} gx.
\end{array}
$ 

\medskip

\item\quad
$
\begin{array}{lcllcl}
	R(1) &=& -\lambda 1, & R(x) &=& 0, \\
	R(g) &=& -\lambda 1, & R(gx) &=& 0.
\end{array}
$

\medskip

\item 
$
\begin{array}{lcllcl}
	R(1) &=& -\frac{\lambda}{2} 1 - \frac{\lambda}{2} g, & R(x) &=& 0, \\
	R(g) &=& -\frac{\lambda}{2} 1 - \frac{\lambda}{2} g, & R(gx) &=& 0.
\end{array}
$

\medskip

\item\quad
$
\begin{array}{lcllcl}
	R(1) &=& \frac{\lambda}{2} 1 - \frac{\lambda}{2} g, & R(x) &=& 0, \\
	R(g) &=& \frac{\lambda}{2} 1 - \frac{\lambda}{2} g, & R(gx) &=& 0.
\end{array}
$

\medskip

\item 
$
\begin{array}{lcllcl}
	R(1) &=& 0, & R(x) &=& 0, \\
	R(g) &=& -\lambda 1 - \lambda g, & R(gx) &=& 0.
\end{array}
$

\medskip 

\item\quad
$
\begin{array}{lcllcl}
	R(1) &=& -\lambda 1, & R(x) &=& \alpha_{gx} 1 - \alpha_{gx} g - \lambda x, \\
	R(g) &=& -\lambda 1, & R(gx) &=& \alpha_{gx} 1 - \alpha_{gx} g - \lambda gx, \quad \alpha_{gx} \neq 0.
\end{array}
$
\end{enumerate}
\end{varwidth}
\newpage
\begin{varwidth}{\textwidth}
\begin{enumerate}[label=\normalfont(\arabic*),itemjoin={\\*[\medskipamount]}]
\setcounter{enumi}{12}

\item 
$
\begin{array}{lcllcl}
	R(1) &=& -\lambda 1, & R(x) &=& -\frac{\lambda}{2} x - \frac{\lambda}{2} gx, \\
	R(g) &=& -\lambda g, & R(gx) &=& -\frac{\lambda}{2} x - \frac{\lambda}{2} gx.
\end{array}
$

\medskip

\item\quad
$
\begin{array}{lcllcl}
	R(1) &=& -\lambda 1, & R(x) &=& 0, \\
	R(g) &=& -\lambda g, & R(gx) &=& \lambda x - \lambda gx.
\end{array}
$
\end{enumerate}
\end{varwidth}
\end{theorem*}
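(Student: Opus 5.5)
The plan is to assemble the final list from the case-by-case theorems of the preceding sections, organized by $\dim\ker R\in\{0,1,2,3\}$. The trivial cases $\dim\ker R=4$ ($R=0$) and the trivial operator $-\lambda\operatorname{id}$ are excluded. First I note that $\ker R$ is always a subalgebra. Indeed, for $a,b\in\ker R$, (RB) gives $0=R(a)R(b)=\lambda R(ab)$, and $\lambda\neq0$. Dually, $\operatorname{im}R$ is a subalgebra, since $R(a)R(b)=R(\ldots)$. Hence, by the lemma on subalgebras up to isomorphism, and after conjugating $R$ by the corresponding (anti)automorphism, the kernel (for $\dim\ker R=2,3$) or the image (for $\dim\ker R=1$) may be assumed to be one of the listed representatives. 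The case $\ker R=0$ is handled by the last theorem. Thus every nontrivial operator is conjugate to one of the operators produced in the theorems of Sections 2--5, with the free parameters $\gamma_g,\delta_g$ already normalised to $0$ by the automorphisms exhibited there.

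Next I collect these normal forms and identify pairs related by dualization $R\mapsto-\lambda\operatorname{id}-R$. The dual of an operator with kernel dimension $k$ has image containing $R(\ker R)\ldots$; more usefully, $\ker(-\lambda\operatorname{id}-R)$ is the $(-\lambda)$-eigenspace of $R$. So dualization pairs operators of small kernel with operators of large kernel. Concretely, I will compute the dual of each parameter-free normal form directly; this is a one-line subtraction. I then locate the result in the list by computing its kernel, mapping it to a representative subalgebra, and invoking the appropriate theorem.

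For instance:
\begin{itemize}
\item Operator (1) with $\ker R=0$ is $-\lambda\operatorname{id}$, which is trivial.
\item Operators (2a),(2b) with $\ker R=0$ have duals with kernel $\langle x,gx\rangle$, giving case (2a) of Theorem~\ref{thr:x,gx}.
\item The kernel-$3$ operators dualize to the image-$3$ (kernel-$1$) operators.
\end{itemize}

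The parameter-dependent families require the discussion already carried out in the dualization section. These are the $\ker R=\langle1,g\rangle$ family with $\alpha_x$, Theorems~\ref{thr:1,x} and \ref{thr:1,x-gx}, and operator (3) of Theorem~\ref{thr:1-g,x-gx} with parameters $\beta_{gx},\gamma_{gx}$. Their duals fall into operator (3) of Theorem~\ref{thr:1-g,x-gx}, split according to $\beta_{gx}\neq0$, or $\beta_{gx}=0$ with $\gamma_{gx}\neq-\lambda/2$, or $\beta_{gx}=0$ with $\gamma_{gx}=-\lambda/2$. This split produces representatives (1), (2)/(14), (3)/(13), and (12) in the list, with $\alpha_x=0$ absorbed into Theorem~\ref{thr:x,gx}.

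Finally I choose one representative per orbit under conjugation and dualization and check that the fourteen listed operators satisfy (RB). For pairwise non-equivalence I would use invariants preserved by conjugation: the isomorphism types of $\ker R$ and $\operatorname{im}R$, and the unordered pair $\{\dim\ker R,\dim\ker(R+\lambda\operatorname{id})\}$, which is invariant under dualization. The main obstacle is this last step: the bookkeeping of which normal forms coincide after dualization, and in particular showing that the one-parameter families (1) and (12) cannot be reduced further. For those I would try to show that $\alpha_x\neq0$ can be rescaled to a fixed value by an automorphism with $p\neq q$. If that fails, I would record $\alpha_x$ as a genuine parameter, as the statement does.
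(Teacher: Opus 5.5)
Your overall strategy is the paper's: normalise $\ker R$ (or $\operatorname{im}R$) using the subalgebra lemma, take the normal forms from the case theorems, and identify each one with the dual of another by computing kernels. The content of this theorem, however, is exactly that bookkeeping. The matching you actually write down is wrong in several places, so the plan as stated does not establish the list.

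\textbf{The $\ker R=0$ operators.} The duals of (2a), (2b) with $\ker R=0$ do not have kernel $\langle x,gx\rangle$. For (2a) with $\gamma_g=\delta_g=0$ the dual $R'$ satisfies $R'(1)=\frac{\lambda}{2}(1+g)=-R'(g)$ and $R'(x)=R'(gx)=0$. Hence $\ker R'=\langle 1+g,x,gx\rangle$ is $3$-dimensional, and after $g\mapsto -g$ it is case (3) of the theorem with $\ker R=\langle 1-g,x,gx\rangle$, i.e.\ item (10). The dual of (2b) is item (10) directly. Item (5) instead dualizes to case (2) of the theorem with $\operatorname{im}R=\langle 1-g,x,gx\rangle$, which has a $1$-dimensional kernel. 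Your claim that the kernel-$3$ operators dualize to kernel-$1$ ones also fails: item (10) satisfies $R^2=0$, so $-\lambda$ is not an eigenvalue and its dual is injective.

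\textbf{Items (12)--(14).} The $\beta_{gx}/\gamma_{gx}$ split of operator (3) of the theorem with $\ker R=\langle 1-g,x-gx\rangle$ produces only the duals of items (1), (2), (3). Items (12), (13), (14) have $\dim\ker R=1$ and image conjugate to $\langle 1,g,x-gx\rangle$. They are the normal forms of the two-parameter family (1a)/(1b) of the theorem with $\operatorname{im}R=\langle 1,g,x-gx\rangle$:
\begin{itemize}
\item (12) for $\beta_{gx}\neq0$;
\item (13) for $\beta_{gx}=0$, $\gamma_{gx}=-\lambda/2$;
\item (14) for $\beta_{gx}=0$, $\gamma_{gx}\neq-\lambda/2$.
\end{itemize}
Equivalently, they are the duals of the $(\alpha_{gx},\gamma_{gx})$ family with $\ker R=\langle 1,g,x-gx\rangle$. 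Your list of parameter-dependent families omits both of these, so nothing in your plan produces (12)--(14) or shows that these families collapse onto them. The paper is also terse here, but it at least places them correctly.

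Your proposed pairings (2)/(14) and (3)/(13) are refuted by your own invariant. Items (2) and (3) have $\dim\ker R=\dim\ker(R+\lambda\operatorname{id})=2$, whereas (13) and (14) have $\dim\ker R=1$ and $\dim\ker(R+\lambda\operatorname{id})=3$. You should actually carry out the dual computations and kernel identifications for every normal form, including these families, rather than relying on the guessed pattern.

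On the positive side, the rescaling you suggest for (1) and (12) does work. Conjugating by $g\mapsto g$, $x\mapsto px+qgx$ (with $p^2\neq q^2$) multiplies $\alpha_x$ (respectively $\alpha_{gx}$) by $p+q$, so each of these families is a single orbit. This is a refinement the paper does not make.
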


\begin{corollary*}
The Rota--Baxter operators from {\normalfont \cite{ma:r-bo} (see also Introduction)} are such that:
\begin{enumerate}[label=\normalfont(\roman*),itemjoin={\\*[\medskipamount]}]
\item {\normalfont (a)} and {\normalfont (b)} are dual,

\medskip

\item {\normalfont (c)} is trivial,

\medskip

\item {\normalfont (d)} is conjugate to the operator {\normalfont (1)},

\medskip 

\item {\normalfont (e)} is conjugate to the operator {\normalfont (12)},

\medskip 

\item {\normalfont (f)} is conjugate to the dual of operator {\normalfont (3)},

\medskip

\item {\normalfont (g)} is conjugate to the dual of operator {\normalfont (1)},

\medskip

\item {\normalfont (h)} is conjugate to the operator {\normalfont (7)}.
\end{enumerate}
\end{corollary*}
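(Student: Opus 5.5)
The statement to prove is the Corollary: each operator (a)--(h) from \cite{ma:r-bo} is to be located in the final classification theorem, up to conjugation by (anti)automorphisms and dualization $R\mapsto -\lambda\operatorname{id}-R$. The plan is the same for every item. First compute $\ker R$ and $\operatorname{im}R$. Then find the section (by kernel dimension) where that kernel, up to the isomorphisms of the Lemma on subalgebras, was treated. Finally, write down an explicit (anti)automorphism $\varphi$, of the general form $\varphi(g)=\varepsilon g+ax+bgx$, $\varphi(x)=px+qgx$ derived before that Lemma, and check $\varphi^{-1}R\varphi=R'$ on the basis $\{1,g,x,gx\}$.

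\textbf{Items (i), (ii), (v), (vi), (vii).} For (i) and (ii), direct inspection suffices. Operator (b) equals $-\lambda\operatorname{id}$ minus operator (a), so they are dual. Operator (c) is literally $-\lambda\operatorname{id}$, hence trivial.

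For (f), (g), (h) I would first pass to the dual, or not, depending on which side matches the list.
\begin{itemize}
\item[(h)] With $p_1=0$, as noted in the introduction, operator (h) has $R(x)=R(gx)$. So $\ker R\ni x-gx$, and $R(1)-R(g)=2p_2gx+\dots$; I expect $\dim\ker R=2$ with kernel of type $\langle 1-g,x-gx\rangle$ after conjugation. The final step is to compare with Theorem~\ref{thr:1-g,x-gx} and with list item (7), using the automorphism that kills $\gamma_g$.
\item[(f)] Its dual has $R'(1)=0$, so $\ker R'\ni 1$. I would compute the remaining kernel vector and show it is of type $\langle 1,x-gx\rangle$ (Theorem~\ref{thr:1,x-gx}), which matches item (3).
\item[(g)] Its dual again kills $1$. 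I would verify that its kernel has type $\langle 1,g\rangle$ after conjugating by $\varphi(g)=g+ax+bgx$. The dual is then in case (1a) of the $\ker R=\langle1,g\rangle$ theorem, i.e.\ item (1).
\end{itemize}

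\textbf{Items (iii), (iv).} These are the two-parameter families (d) and (e), and they are the main obstacle. For (d), $R(1)=0$ and $R(g)=p_1(g-1)+\dots$. I will show $\operatorname{rank}R=2$ by checking that $R(x)-R(gx)=-\lambda x+\lambda gx$, and that $R(g)$ lies in the span of $R(gx)$ and $x-gx$; the last uses the product formula in the coefficients of $R(g)$. The kernel then contains $1$ and one more vector of the form $g+ux+vgx$. Conjugating by $\varphi(g)=g-ux-vgx$, $\varphi(x)=x$ brings the kernel to $\langle1,g\rangle$. By the $\ker R=\langle1,g\rangle$ theorem, the result lies in one of (1a)--(1d), all conjugate to (1a). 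It must have $\alpha_x\neq0$, since $p_3\neq0$. This gives item (1). The identification of $\alpha_x$ up to conjugation may require an additional scaling automorphism $\varphi(x)=tx$.

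For (e), $R(1)=-\lambda1$, so the same procedure is applied to its dual, which kills $1$. The resulting image must have type $\langle1-g,x-gx\rangle$. Alternatively, I would directly find $\varphi$ sending $R$ to item (12), whose parameter $\alpha_{gx}$ corresponds to $p_3$.

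The hardest step is exhibiting the kernel vector explicitly in terms of $p_1,p_2,p_3$ and handling the degenerate parameter values, e.g.\ $\lambda+p_1=0$. I would treat generic parameters first and then check the special values separately. In every case the final verification is a finite basis computation.
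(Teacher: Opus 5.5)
\textbf{Assessment.} The paper gives no separate proof of this corollary. It is meant to follow from the classification by exactly the bookkeeping you describe: determine the kernel/image type, then write down an explicit (anti)automorphism. That bookkeeping settles (i), (ii), (iii) and (vii).

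For (d), the second kernel vector is $g-\frac{\lambda+p_1+p_2}{p_3}x+\frac{\lambda+p_2}{p_3}gx$ for all $p_1,p_2$. Conjugating it to $g$ gives case (1c) of the $\ker R=\langle 1,g\rangle$ theorem with $\alpha_x=-p_3$, so no degenerate parameter values arise. For (h) with $p_1=0$, note that $R(1)-R(g)=p_2(x+gx)$, so the kernel is $\langle 1-g+\frac{2p_2}{\lambda}x,\,x-gx\rangle$.

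However, your concrete predictions for (e), (f) and (g) are wrong. For (f) and (g) the failing step cannot be repaired, because the identifications asserted in (v) and (vi) do not hold for the operators as printed.

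\textbf{Operator (g).} Put $R^\vee=-\lambda\operatorname{id}-R$. Then $R^\vee(1)=0$ and $R^\vee(x)=p_2(1+g)+(\lambda+p_1)(x+gx)$. Moreover $R^\vee(gx)=-R^\vee(x)$ and $R^\vee(g)=-\frac{\lambda}{p_2}R^\vee(x)$. So $R^\vee$ has rank $1$ and kernel $\langle 1,\,g+\frac{\lambda}{p_2}x,\,x+gx\rangle$, not a kernel of type $\langle 1,g\rangle$. Moreover, (g) has rank $3$. The dual of item (1) ($1\mapsto-\lambda 1$, $g\mapsto-\lambda g$, $x,gx\mapsto-\alpha_x(1+g)$) has rank $2$. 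Conjugation preserves rank, so (vi) is false as stated. In fact (g) belongs to the case $\operatorname{im}R=\langle 1,g,x-gx\rangle$ and is conjugate to item (12).

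\textbf{Operator (f).} Since $R(1)=-\lambda 1$, the pair $(1,a)$ forces $R^2=-\lambda R$. For the printed formulas this gives $(p_1-p_2)(\lambda+p_1+p_2)=p_2(p_1-p_2)=p_1(p_1-p_2)=0$, i.e.\ $p_1=p_2=:p$. Then $R^\vee(x)=p(x+gx)$ and $R^\vee(gx)=-(\lambda+p)(x+gx)$, so $\ker R^\vee=\langle 1,(\lambda+p)x+p\,gx\rangle$. Every (anti)automorphism sends $x\mapsto ux+v\,gx$ with $u^2\neq v^2$ and permutes the two lines $F(x\pm gx)$. Hence this kernel is of type $\langle 1,x-gx\rangle$ only for $p=-\lambda/2$. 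For every other $p$ it is of type $\langle 1,x\rangle$, and (f) is conjugate to the dual of item (2); for $p=0$ it is literally that dual.

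\textbf{Operator (e).} Here $R^\vee(gx)=R^\vee(x)$ and $R^\vee(g)=\frac{\lambda+p_1}{p_3}R^\vee(x)$, so again $R^\vee$ has rank $1$. Thus (e) has a one-dimensional kernel and image $\langle 1,\,g-\frac{\lambda+p_1}{p_3}x,\,x-gx\rangle$; no image of type $\langle 1-g,x-gx\rangle$ appears. Your fallback of conjugating (e) directly onto (12) does work, but only with an antiautomorphism. Matching the images forces $\varepsilon=-1$. Any automorphism with $\varepsilon=-1$ then gives $R(x)$ and $R(gx)$ opposite $(1-g)$-coefficients, while (12) has equal ones.
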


\newpage

\begin{center}

\printbibliography

\end{center}
\end{document}